\newcounter{countercheck}[section]
\theoremstyle{plain}
\newtheorem{theorem}[countercheck]{Theorem}
\newtheorem{proposition}[countercheck]{Proposition}
\newtheorem{lemma}[countercheck]{Lemma}
\newtheorem{corollary}[countercheck]{Corollary}
\theoremstyle{definition}
\newtheorem{convention}[countercheck]{Convention}
\theoremstyle{remark}
\newtheorem{remark}[countercheck]{Remark}
\renewcommand{\phi}{\varphi}
\renewcommand{\epsilon}{\varepsilon}
\newcommand{\NN}{\mathbb{N}}
\newcommand{\ZZ}{\mathbb{Z}}
\newcommand{\RR}{\mathbb{R}}
\newcommand{\FF}{\mathbb{F}}
\newcommand{\C}{\mathcal{C}}
\renewcommand{\P}{\mathcal{P}}
\DeclareMathOperator{\Aut}{Aut}
\DeclareMathOperator{\proj}{proj}
\numberwithin{equation}{section} 
\title{On Growth Functions of Coxeter Groups}
\author{Sebastian \textit{Bischof}\footnote{email: sebastian.bischof@math.uni-giessen.de} \\
	Mathematisches Institut, Arndtstra\ss e 2, 35392 Gie\ss en, Germany}
\date{\today}
\begin{document}

\maketitle

\begin{abstract}
	Let $(W, S)$ be a Coxeter system of rank $n$ and let $p_{(W, S)}(t)$ be its growth function. It is known that $p_{(W, S)}(q^{-1}) < \infty$ holds for all $n \leq q \in \NN$. In this paper we will show that this still holds for $q = n-1$, if $(W, S)$ is $2$-spherical. Moreover, we will prove that $p_{(W, S)}(q^{-1}) = \infty$ holds for $q = n-2$, if the Coxeter diagram of $(W, S)$ is the complete graph. These two results provide a complete characterization of the finiteness of the growth function in the case of $2$-spherical Coxeter systems with complete Coxeter diagram.
		
	\medskip \noindent \textbf{Keywords} Coxeter groups, Growth function, Poincaré series
	
	\medskip \noindent \textbf{Mathematics Subject Classification} 20F55, 51F15
\end{abstract}

\section{Introduction}

One of the most central results in the theory of lattices is Margulis' Normal Subgroup Theorem for  irreducible lattices in connected semi-simple Lie groups of real rank $\geq 2$ with finite center and no non-trivial compact factor \cite{Ma91}. Among all the recent generalizations, let me mention that Bader and Shalom proved a version of the Normal Subgroup Theorem for irreducible cocompact lattices in a product of two locally compact, non-discrete, compactly generated groups \cite{BS06}. Based on earlier results in \cite{Re05}, Caprace and Rémy applied the Normal Subgroup Theorem to show simplicity for Kac-Moody groups over finite fields of irreducible, non-spherical and non-affine type that are twin building lattices (cf.\ \cite[Theorem 18]{CR09}). Moreover, it can be used to prove virtual simplicity of certain twin tree lattices with non-trivial commutation relations (cf.\ \cite{CR16}).

In \cite{Re99} and \cite{CG99}, Rémy, and independently Carbone and Garland, proved that certain groups acting on (twin) buildings are lattices. To be more precise: Let $(W, S)$ be a Coxeter system of finite rank and let $\Phi := \Phi(W, S)$ be its associated set of roots (viewed as half-spaces). Let $\mathcal{D} = (G, (U_{\alpha})_{\alpha \in \Phi})$ be an RGD-system of type $(W, S)$, i.e.\ a group $G$ together with a family $(U_{\alpha})_{\alpha \in \Phi}$ of subgroups (which we call \emph{root groups}) indexed by the set of roots $\Phi$ satisfying some combinatorial axioms (for the precise definition we refer to \cite[Ch.\ $7,8$]{AB08}). Then there exists a \emph{twin building} $\Delta = (\Delta_+, \Delta_-, \delta_*)$ such that $G$ acts on $\Delta$. It turns out that under some conditions, $G^{\dagger} := \langle U_{\alpha} \mid \alpha \in \Phi \rangle \leq \Aut(\Delta_+) \times \Aut(\Delta_-)$ and $U_+ := \langle U_{\alpha} \mid \alpha \in \Phi_+ \rangle \leq \Aut(\Delta_-)$ are lattices (cf. \cite{Re99}, \cite{CG99}) -- and in this case $G^{\dagger}$ is an example of a twin building lattice. Sufficient conditions are that every root group is finite, $W$ is infinite and for $q_{\min} := \min\{ \vert U_{\alpha} \vert \mid \alpha \in \Phi \}$ one has $p_{(W, S)}\left( \frac{1}{q_{\min}} \right) < \infty$, where $p_{(W, S)}(t)$ denotes the \emph{growth function} of $(W, S)$. It is clear that this is finite if $\vert S \vert \leq q_{\min}$. It is particularly unsatisfying that this criterion does not apply to Coxeter systems of rank $n \geq 3$ and $q_{\min} = 2$. However, there are examples of Coxeter systems $(W, S)$ of rank $n \geq 3$ with $p_{(W, S)}\left( \frac{1}{2} \right) < \infty$. Note that the growth function $p_{(W, S)}(t)$ applied to $q^{-1}$ with $q\in \NN$ and $q\geq 2$ is finite for spherical and affine Coxeter systems (cf.\ \cite[Ch.\ VI, Exercises $\S 4$, $10$]{Bo68}).

Suppose $(W, S)$ is of type $(4, 4, 4)$, that is, $(W, S)$ is of rank $3$ and $o(st) = 4$ for all $s\neq t\in S$. In \cite{BiDiss} we constructed uncountably many new examples of RGD-systems of type $(4, 4, 4)$ in which every root group has cardinality $2$. As the known criterion does not apply to such RGD-systems, we first asked the question whether $p_{(W, S)}\left( \frac{1}{2} \right) < \infty$ holds. It turns out that this is indeed the case and the reason is the following more general result, which only depends on the rank of $(W, S)$ and not on the type (cf.\ Theorem \ref{Theorem: convergence of growth function}):

\medskip
\noindent \textbf{Theorem A:} Let $(W, S)$ be a $2$-spherical Coxeter system of rank $n$. Then $p_{(W, S)}\left( \frac{1}{n-1} \right) < \infty$.

\medskip
After completion of this project I was informed by Corentin Bodart that a more general version of Theorem A can be deduced from \cite[Theorem $1$]{AL02} (cf.\ also \cite{dlH00}). This implies that one can replace in Theorem A \emph{$2$-spherical} by \emph{non-universal}, i.e.\ $m_{st} < \infty$ for some $s\neq t\in S$. Our methods of the proof are very different and most of the results proved in this paper are also used to prove Theorem C below. Our proofs are Coxeter group theoretic, while the proofs in \cite{AL02} are for non-elementary word hyperbolic groups.

\medskip
In view of the examples constructed in \cite{BiDiss}, Theorem A produces many new examples of lattices in (locally compact) automorphism groups of buildings and in a product of two automorphism groups of buildings. Combining Theorem A with \cite[Théorème 1]{Re99}, we obtain that almost all RGD-systems of $2$-spherical type and rank $3$ are twin building lattices:

\medskip
\noindent \textbf{Corollary B:} Let $(W, S)$ a Coxeter system and let $\mathcal{D} = (G, (U_{\alpha})_{\alpha \in \Phi})$ be an RGD-system of type $(W, S)$. Assume that the following are satisfied:
\begin{itemize}
	\item $(W, S)$ is $2$-spherical of rank $3$ and $W$ is infinite.
	
	\item $G = \langle U_{\alpha} \mid \alpha \in \Phi \rangle$ and $\vert U_{\alpha} \vert < \infty$ for all $\alpha \in \Phi$.
\end{itemize}
Then $\mathcal{D}$ is a twin building lattice.

\medskip
\noindent \textbf{Corollary B (Kac-Moody version):} Let $(W, S)$ be a $2$-spherical Coxeter system of rank $3$ such that $W$ is infinite, and let $\mathbf{G}$ be the Kac-Moody group (in the sense of \cite{Ti87}) of type $(W, S)$. Then $\mathbf{G}(\FF_q)$ is a twin building lattice, where $\FF_q$ denotes the finite field with $q$ elements.

\medskip
Now the question is whether the finiteness still holds for some $q < n-1$. It turns out that in the class of Coxeter systems with complete Coxeter diagram this will not happen (cf.\ Theorem \ref{Theorem: divergence of growth function}):

\medskip
\noindent \textbf{Theorem C:} Let $(W, S)$ be a Coxeter system of rank $n\geq 3$ such that the underlying Coxeter diagram is the complete graph. Then $p_{(W, S)}\left( \frac{1}{n-2} \right) = \infty$.

\medskip
Suppose that the Coxeter diagram is $2$-spherical, but the Coxeter diagram is not the complete graph. If the number of non-edges in the Coxeter diagram compared to the number of edges is \emph{large}, then it is still possible that $p_{(W, S)} \left( \frac{1}{n-2} \right) <\infty$ holds (cf.\ \cite{Te15data}). We also remark that Theorem C can be used to exclude certain subdiagrams for twin building lattices, as parabolic subgroups of twin building lattices are again twin building lattices:

\medskip
\noindent \textbf{Corollary D:} Let $(W, S)$ be a Coxeter system, let $\mathcal{D}$ be an RGD-system of type $(W, S)$ with finite root groups and let $q_{\min} := \min\{ \vert U_{\alpha} \vert \mid \alpha \in \Phi \}$. If $\mathcal{D}$ is a twin building lattice, then there does not exist a subdiagram of $(W, S)$ with at least $q_{\min} +2$ vertices, whose underlying Coxeter diagram is the complete graph.

\renewcommand{\abstractname}{Acknowledgement}
\begin{abstract}
	I am very grateful to Bernhard M\"uhlherr for stimulating discussions and interesting questions on the topic. I also thank Corentin Bodart and Pierre-Emmanuel Caprace for valuable remarks on an earlier draft. I thank Corentin Borart for pointing out the reference \cite{AL02}.
\end{abstract}

\section{Preliminaries}

\subsection*{Growth of finitely generated groups}

This subsection is based on \cite{Te16}.

Let $G$ be a finitely generated group, and let $X = X^{-1} \subseteq G \backslash \{1\}$ be a finite, symmetric set of generators. The \emph{length} of $g\in G$ with respect to $X$ is the minimal $n$ such that $g = x_1 \cdots x_n$ with $x_i \in X$; the \emph{length function} will be denoted by $\ell_{(G, X)}: G \to \NN$. For $n\in \NN$, the \emph{sphere} in $\mathrm{Cay}(G, X)$ centered around $1_G$ with radius $n$ will be denoted by
\[ C_n^{(G, X)} := \left\{ g\in G \mid \ell_{(G, X)}(g) = n \right\}. \]
The cardinalities are defined as $c_n^{(G, X)} := \vert C_n^{(G, X)} \vert$. The \emph{growth function} of $(G, X)$ is given by
\[ p_{(G, X)}(t) := \sum_{n\geq 0} c_n^{(G, X)} t^n \in \ZZ[[t]]. \]

\subsection*{Coxeter systems}

Let $(W, S)$ be a Coxeter system and let $\ell := \ell_{(W, S)}$ be the corresponding length function. For $s, t \in S$ we denote the order of $st$ in $W$ by $m_{st}$. The \textit{Coxeter diagram} corresponding to $(W, S)$ is the labeled graph $(S, E(S))$, where $E(S) = \{ \{s, t \} \mid m_{st}>2 \}$ and where each edge $\{s,t\}$ is labeled by $m_{st}$ for all $s, t \in S$. The \textit{rank} of the Coxeter system is the cardinality of the set $S$.

It is well-known that for each $J \subseteq S$ the pair $(\langle J \rangle, J)$ is a Coxeter system (cf.\ \cite[Ch. IV, §$1$ Theorem $2$]{Bo68}). A subset $J \subseteq S$ is called \textit{spherical} if $\langle J \rangle$ is finite. The Coxeter system is called \textit{$2$-spherical} if $\langle J \rangle$ is finite for all $J \subseteq S$ containing at most $2$ elements (i.e.\ $m_{st} < \infty$ for all $s, t \in S$). Given a spherical subset $J$ of $S$, there exists a unique element of maximal length in $\langle J \rangle$, which we denote by $r_J$ (cf.\ \cite[Corollary $2.19$]{AB08}).

For $i \in \NN$ we define
\begin{itemize}[label=$\bullet$]
	\item $C_i := C_i^{(W, S)} = \{ w\in W \mid \ell(w) = i \}$ and $c_i := \vert C_i \vert = c_i^{(W, S)}$;
	
	\item $D_i := \{ w\in C_i \mid \exists! s\in S: \ell(ws) < \ell(w) \}$ and $d_i := \vert D_i \vert$;
\end{itemize}

\subsection*{The chamber system $\Sigma(W, S)$}

Let $(W, S)$ be a Coxeter system. Defining $w \sim_s w'$ if and only if $w^{-1}w' \in \langle s \rangle$ we obtain a chamber system with chamber set $W$ and equivalence relations $\sim_s$ for $s\in S$, which we denote by $\Sigma(W, S)$. We call two chambers $w, w'$ \textit{$s$-adjacent} if $w \sim_s w'$ and \textit{adjacent} if they are $s$-adjacent for some $s\in S$. A \textit{gallery of length $n$} from $w_0$ to $w_n$ is a sequence $(w_0, \ldots, w_n)$ of chambers where $w_i$ and $w_{i+1}$ are adjacent for each $0 \leq i < n$. A gallery $(w_0, \ldots, w_n)$ is called \textit{minimal} if there exists no gallery from $w_0$ to $w_n$ of length $k<n$ and we denote the length of a minimal gallery from $w_0$ to $w_n$ by $\ell(w_0, w_n)$. For $J \subseteq S$ we define the \textit{$J$-residue} of a chamber $c\in W$ to be the set $R_J(c) := c \langle J \rangle$. A \textit{residue} $R$ is a $J$-residue for some $J \subseteq S$; we call $J$ the \textit{type} of $R$ and the cardinality of $J$ is called the \textit{rank} of $R$. A residue is called \emph{spherical} if its type is a spherical subset of $S$. Let $R$ be a spherical $J$-residue. Two chambers $x, y \in R$ are called \emph{opposite in $R$} if $x^{-1} y = r_J$. Two residues $P, Q \subseteq R$ are called \emph{opposite in $R$} if for each $p\in P$ there exists $q\in Q$ such that $p, q$ are opposite in $R$. A \textit{panel} is a residue of rank $1$. It is a fact that for every chamber $x\in W$ and every residue $R$ there exists a unique chamber $z\in R$ such that $\ell(x, y) = \ell(x, z) + \ell(z, y)$ holds for each chamber $y\in R$. The chamber $z$ is called the \textit{projection} of $x$ onto $R$ and is denoted by $z = \proj_R x$.

A subset $\Sigma \subseteq W$ is called \emph{convex} if for any two chambers $c, d \in \Sigma$ and any minimal gallery $(c_0 = c, \ldots, c_k = d)$, we have $c_i \in \Sigma$ for all $0 \leq i \leq k$. Note that residues are convex by \cite[Example $5.44(b)$]{AB08}.

For two residues $R$ and $T$ we define $\proj_T R := \{ \proj_T r \mid r\in R \}$. By \cite[Lemma $5.36(2)$]{AB08} $\proj_T R$ is a residue contained in $T$. The residues $R$ and $T$ are called \emph{parallel} if $\proj_T R = T$ and $\proj_R T = R$.

\subsection*{Roots and walls}

Let $(W, S)$ be a Coxeter system. A \textit{reflection} is an element of $W$ that is conjugate to an element of $S$. For $s\in S$ we let $\alpha_s := \{ w\in W \mid \ell(sw) > \ell(w) \}$ be the \textit{simple root} corresponding to $s$. A \textit{root} is a subset $\alpha \subseteq W$ such that $\alpha = v\alpha_s$ for some $v\in W$ and $s\in S$. We denote the set of all roots by $\Phi(W, S)$. The set $\Phi(W, S)_+ := \{ \alpha \in \Phi(W, S) \mid 1_W \in \alpha \}$ is the set of all \textit{positive roots} and $\Phi(W, S)_- := \{ \alpha \in \Phi(W, S) \mid 1_W \notin \alpha \}$ is the set of all \textit{negative roots}. For each root $\alpha \in \Phi(W, S)$ we denote the \textit{opposite root} by $-\alpha$ and we denote the unique reflection which interchanges these two roots by $r_{\alpha}$. For $\alpha \in \Phi(W, S)$ we denote by $\partial \alpha$ (resp. $\partial^2 \alpha$) the set of all panels (resp. spherical residues of rank $2$) stabilized by $r_{\alpha}$. Furthermore, we define $\mathcal{C}(\partial \alpha) := \bigcup_{P \in \partial \alpha} P$ and $\mathcal{C}(\partial^2 \alpha) := \bigcup_{R \in \partial^2 \alpha} R$.

The set $\partial \alpha$ is called the \textit{wall} associated to $\alpha$. Let $G = (c_0, \ldots, c_k)$ be a gallery with $c_{i-1} \neq c_i$ for each $1 \leq i \leq k$. We say that $G$ \textit{crosses the wall $\partial \alpha$} if there exists $1 \leq i \leq k$ such that $\{ c_{i-1}, c_i \} \in \partial \alpha$. It is a basic fact that a minimal gallery crosses a wall at most once (cf.\ \cite[Lemma $3.69$]{AB08}). Moreover, a gallery which crosses each wall at most once is already minimal.

\begin{convention}
	For the rest of this paper we let $(W, S)$ be a Coxeter system of finite rank and we let $\Phi := \Phi(W, S)$ (resp. $\Phi_+ := \Phi(W, S)_+$ and $\Phi_- := \Phi(W, S)_-$).
\end{convention}

A pair $\{ \alpha, \beta \} \subseteq \Phi$ of roots is called \emph{prenilpotent}, if $\alpha \cap \beta \neq \emptyset \neq (-\alpha) \cap (-\beta)$. For a prenilpotent pair $\{ \alpha, \beta \}$ of roots we will write $\left[ \alpha, \beta \right] := \{ \gamma \in \Phi \mid \alpha \cap \beta \subseteq \gamma \text{ and } (-\alpha) \cap (-\beta) \subseteq (-\gamma) \}$ and $(\alpha, \beta) := \left[ \alpha, \beta \right] \backslash \{ \alpha, \beta \}$. We note that roots are convex (cf.\ \cite[Lemma $3.44$]{AB08}).

Let $(c_0, \ldots, c_k)$ and $(d_0 = c_0, \ldots, d_k = c_k)$ be two minimal galleries from $c_0$ to $c_k$ and let $\alpha \in \Phi$. Then $\partial \alpha$ is crossed by the minimal gallery $(c_0, \ldots, c_k)$ if and only if it is crossed by the minimal gallery $(d_0, \ldots, d_k)$.

\begin{lemma}\label{Lemma: residue and root}
	Let $R$ be a spherical residue of $\Sigma(W, S)$ of rank $2$ and let $\alpha \in \Phi$. Then exactly one of the following hold:
	\begin{enumerate}[label=(\alph*)]
		\item $R \subseteq \alpha$;
		
		\item $R \subseteq (-\alpha)$;
		
		\item $R \in \partial^2 \alpha$;
	\end{enumerate}
\end{lemma}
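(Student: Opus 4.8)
The plan is to run a case analysis governed by how the chambers of $R$ are distributed between the two half-spaces $\alpha$ and $-\alpha$. Since $\{\alpha, -\alpha\}$ partitions $W$, it partitions $R$ into $R \cap \alpha$ and $R \cap (-\alpha)$, and as $R \neq \emptyset$ these two sets cannot both be empty. If $R \cap (-\alpha) = \emptyset$ then $R \subseteq \alpha$, which is case (a); if $R \cap \alpha = \emptyset$ then $R \subseteq (-\alpha)$, which is case (b). Thus the only remaining situation is that both $R \cap \alpha$ and $R \cap (-\alpha)$ are nonempty, and the substance of the statement is the claim that in this situation one necessarily has $R \in \partial^2 \alpha$, i.e.\ $r_\alpha R = R$.

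To establish this, I would pick $c \in R \cap \alpha$ and $d \in R \cap (-\alpha)$ and consider a minimal gallery $(c = c_0, \ldots, c_k = d)$. Because residues are convex, every $c_i$ lies in $R$. Since $c$ and $d$ lie on opposite sides of the wall $\partial\alpha$, the sequence of sides changes along the gallery, so there is an index $i$ with $c_{i-1}$ and $c_i$ on opposite sides; the panel they span then lies in $\partial\alpha$ and is contained in $R$. Writing $R = x\langle s, t\rangle$ with $x = c_{i-1} \in R$ and $c_i = xs$, the reflection $r_\alpha$ stabilizes this panel $P = x\langle s \rangle$, and since $W$ acts simply transitively on chambers, $r_\alpha$ cannot fix a chamber and must therefore swap the two chambers of $P$; hence $r_\alpha = xsx^{-1}$. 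The desired conclusion is then a short coset computation: $r_\alpha R = xsx^{-1} \cdot x\langle s, t\rangle = xs\langle s, t\rangle = x\langle s, t\rangle = R$, using $s \in \langle s, t\rangle$, so $R \in \partial^2\alpha$.

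For mutual exclusivity I would argue as follows. Cases (a) and (b) cannot hold simultaneously because $R \neq \emptyset$. Case (c) is incompatible with either (a) or (b): since $r_\alpha$ interchanges $\alpha$ and $-\alpha$, the equality $r_\alpha R = R$ forces $R$ to meet both half-spaces (for any $c \in R$ the chambers $c$ and $r_\alpha c$ lie on opposite sides), contradicting $R \subseteq \alpha$ or $R \subseteq (-\alpha)$. Together with the trichotomy above this yields precisely that exactly one of (a), (b), (c) holds.

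The main obstacle is the middle step: upgrading the \emph{local} information ``some panel $P \subseteq R$ lies on the wall $\partial\alpha$'' to the \emph{global} statement ``$r_\alpha$ stabilizes the entire residue $R$''. The clean resolution is the identification $r_\alpha = xsx^{-1}$ with $s$ a generator of the type of $R$, so that $r_\alpha$ acts on $R$ by left multiplication of the coset representative by an element of $\langle s, t\rangle$. Here I rely on two standard facts from \cite{AB08}: that a minimal gallery whose endpoints lie on opposite sides of $\partial\alpha$ must cross $\partial\alpha$, and that a panel lies on $\partial\alpha$ exactly when it has one chamber in $\alpha$ and one in $-\alpha$, with $r_\alpha$ then interchanging them.
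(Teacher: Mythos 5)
Your proposal is correct and follows essentially the same route as the paper's proof: both reduce to the case where $R$ meets $\alpha$ and $-\alpha$, take a minimal gallery between such chambers, use convexity of residues to keep the gallery inside $R$, and locate a crossing panel in $\partial\alpha$ to conclude $R \in \partial^2\alpha$. The only difference is that you spell out the steps the paper leaves implicit, namely the mutual exclusivity of the three cases and the coset computation $r_\alpha = xsx^{-1}$, $r_\alpha R = R$ upgrading the crossing panel to stabilization of the whole residue.
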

\begin{proof}
	It is clear that the three cases are exclusive. Suppose that $R \not\subseteq \alpha$ and $R \not\subseteq (-\alpha)$. Then there exist $c \in R \cap (-\alpha)$ and $d \in R \cap \alpha$. Let $(c_0 = c, \ldots, c_k = d)$ be a minimal gallery. As residues are convex, we have $c_i \in R$ for each $0 \leq i \leq k$. As $c\in (-\alpha), d\in \alpha$, there exists $1 \leq i \leq k$ with $c_{i-1} \in (-\alpha), c_i \in \alpha$. In particular, $\{ c_{i-1}, c_i \} \in \partial \alpha$ and hence $R \in \partial^2 \alpha$.
\end{proof}

\begin{lemma}\label{CM06Prop2.7}
	Let $R, T$ be two spherical residues of $\Sigma(W, S)$. Then the following are equivalent
	\begin{enumerate}[label=(\roman*)]
		\item $R, T$ are parallel;
		
		\item a reflection of $\Sigma(W, S)$ stabilizes $R$ if and only if it stabilizes $T$;
		
		\item there exist two sequences $R_0 = R, \ldots, R_n = T$ and $T_1, \ldots, T_n$ of residues of spherical type such that for each $1 \leq i \leq n$ the rank of $T_i$ is equal to $1+\mathrm{rank}(R)$, the residues $R_{i-1}, R_i$ are contained and opposite in $T_i$ and moreover, we have $\proj_{T_i} R = R_{i-1}$ and $\proj_{T_i} T = R_i$.
	\end{enumerate}
\end{lemma}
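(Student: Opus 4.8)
The plan is to establish the two outer equivalences by reducing everything to the subgroups $\Stab_W(R)$. The key reformulation is that for a spherical $J$-residue $R$ one has $\Stab_W(R) = c\langle J\rangle c^{-1}$ for any chamber $c\in R$, a finite reflection subgroup of $W$; since a finite reflection group is generated by the reflections it contains, two spherical residues $R, T$ satisfy (ii) if and only if $\Stab_W(R) = \Stab_W(T)$ as subgroups. I will repeatedly use the observation --- immediate from Lemma~\ref{Lemma: residue and root} applied along a minimal gallery, together with the fact that a minimal gallery crosses each wall at most once --- that $r_\alpha$ stabilises $R$ exactly when $R\cap\alpha\neq\emptyset\neq R\cap(-\alpha)$. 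For (ii)$\Rightarrow$(i), set $P := \Stab_W(R) = \Stab_W(T)$; then $P$ fixes both residues setwise and acts transitively on each (each is a single coset of its type). As $\proj_T$ is equivariant under elements stabilising $T$, for $c\in R$ we get $\proj_T R = \proj_T(Pc) = P\,\proj_T(c) = T$ by transitivity of $P$ on $T$, and symmetrically $\proj_R T = R$; hence $R, T$ are parallel.

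For (i)$\Rightarrow$(ii) I argue by contradiction using the separation form of (ii). Suppose $r_\alpha$ stabilises $R$ but not $T$; then $T$ lies entirely on one side of $\partial\alpha$, say $T\subseteq\alpha$. Since $r_\alpha$ stabilises $R$, there is $c\in R\cap(-\alpha)$, and by parallelism $\proj_R T = R$, so $c = \proj_R d$ for some $d\in T\subseteq\alpha$. Now $r_\alpha c\in R\cap\alpha$, and the gate property makes the concatenation of a minimal gallery from $d$ to $c$ with a minimal gallery from $c$ to $r_\alpha c$ a single minimal gallery from $d$ to $r_\alpha c$. This gallery crosses $\partial\alpha$ once in its first half (from $\alpha$ to $-\alpha$) and once in its second half (from $-\alpha$ to $\alpha$), contradicting that a minimal gallery crosses a wall at most once. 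Thus (i) forces (ii), completing (i)$\Leftrightarrow$(ii).

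The implication (iii)$\Rightarrow$(ii) comes down to a single elementary link: if $R', R''$ are opposite in a spherical residue $M$ of type $K$ with $\mathrm{rank}(M) = \mathrm{rank}(R')+1$, then $\Stab_W(R') = \Stab_W(R'')$. Writing $R' = c\langle K\setminus\{s\}\rangle$ with $c\in R'$, the definition of oppositeness (for each $p\in R'$ a partner $q = p\,r_K\in R''$) forces $R'' = R'r_K$, hence $R'' = (cr_K)\langle r_K(K\setminus\{s\})r_K\rangle$. A direct computation with the longest element, using that $r_K$ normalises $K$ and induces the opposition involution $s\mapsto s' := r_K s r_K$ on it, gives $\Stab_W(R'') = c\langle r_K(K\setminus\{s'\})r_K\rangle c^{-1} = c\langle K\setminus\{s\}\rangle c^{-1} = \Stab_W(R')$. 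Chaining this equality along the sequence in (iii) yields $\Stab_W(R) = \Stab_W(T)$, i.e.\ (ii); together with (ii)$\Rightarrow$(i) this also gives (iii)$\Rightarrow$(i).

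The substantial implication is (ii)$\Rightarrow$(iii), and this is where I expect the main obstacle. I would argue by induction on $\ell(R,T) := \min\{\ell(x,y)\mid x\in R,\ y\in T\}$, the base case $\ell(R,T) = 0$ being immediate since residues sharing a chamber and having equal stabilisers have equal type, hence coincide. For the inductive step, fix a gate pair $c = \proj_R d$, $d = \proj_T c$ and let $s$ be the type of the first panel of a minimal gallery from $c$ to $d$; the separation observation gives $R\subseteq\alpha_s$ and $T\subseteq-\alpha_s$. One then wants to take $T_1 := c\langle J\cup\{s\}\rangle$ (where $J$ is the type of $R$), set $R_1 := \proj_{T_1}T$, and check that $R_1$ is the residue opposite to $R$ in $T_1$ with $\ell(R_1,T) < \ell(R,T)$; since $\Stab_W(R_1) = \Stab_W(R) = \Stab_W(T)$ by the elementary link above, $R_1$ and $T$ again satisfy (ii) and the induction hypothesis produces a chain which we prepend with the step $R, R_1$. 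The crux --- and the point where the mere local data are insufficient --- is to guarantee that $T_1$ is \emph{spherical}: the gate condition alone does not force $\langle J\cup\{s\}\rangle$ to be finite, and one must exploit the existence of the parallel partner $T$ with $\Stab_W(T) = \langle J\rangle$. Concretely, distinct parallel residues with common stabiliser $\langle J\rangle$ are governed by $N_W(\langle J\rangle)$, and (ii)$\Rightarrow$(iii) is essentially the assertion that $N_W(\langle J\rangle)$ is generated by $\langle J\rangle$ together with the reflections $r_{J\cup\{s\}}$ for which $J\cup\{s\}$ is spherical; this is the structure theorem for normalisers of finite parabolic subgroups. I would therefore either invoke that normaliser structure directly, or prove the sphericity of $T_1$ by a dedicated gallery argument exhibiting the longest element of $\langle J\cup\{s\}\rangle$ from the constraints $R\subseteq\alpha_s$, $T\subseteq-\alpha_s$ and $\Stab_W(T) = \langle J\rangle$. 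Once sphericity is secured, the induction closes without further difficulty.
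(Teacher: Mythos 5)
For calibration: the paper does not actually prove this lemma --- its ``proof'' is the citation \cite[Proposition $2.7$]{CM06} --- so any self-contained argument is necessarily a different route from the paper's. Your treatment of three of the four implications is correct and cleanly executed: the identification $\Stab_W(R)=c\langle J\rangle c^{-1}$ (so that (ii) amounts to $\Stab_W(R)=\Stab_W(T)$, these groups being generated by the reflections they contain), the observation that $r_\alpha$ stabilizes a residue iff the residue meets both $\alpha$ and $-\alpha$, the equivariance-plus-transitivity argument for (ii)$\Rightarrow$(i), the double-crossing contradiction for (i)$\Rightarrow$(ii), and the computation $R''=R'r_K$ via the opposition involution for (iii)$\Rightarrow$(ii) are all sound. (One caution in the last step: with the paper's one-sided definition of ``opposite'' you only get $R'r_K\subseteq R''$; to conclude $R''=R'r_K$, and hence equality rather than mere containment of stabilizers along the chain, you need the symmetric reading of oppositeness or a separate argument that $\mathrm{rank}(R_i)=\mathrm{rank}(R)$ for all $i$.)

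The genuine gap is (ii)$\Rightarrow$(iii), and you flag it yourself: your induction on $\ell(R,T)$ hinges on the residue $T_1=c\langle J\cup\{s\}\rangle$ being \emph{spherical}, and this is never proved --- you only propose to ``either invoke'' a normalizer structure theorem ``or prove the sphericity of $T_1$ by a dedicated gallery argument,'' carrying out neither. The fallback is moreover stated imprecisely: the elements $r_{J\cup\{s\}}$ are longest elements of finite standard parabolic subgroups, not reflections, and the precise statement you would need (that conjugations between finite parabolic subgroups are generated by the elementary moves $r_{J\cup\{s\}}r_J$ --- a theorem going back to Deodhar, Howlett and Brink--Howlett) is a result of depth comparable to the lemma itself; invoking it without proof or precise citation leaves the hard direction open, and risks circularity, since standard proofs of that normalizer theorem run through exactly the projection/opposition machinery being established here. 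Even granting sphericity of $T_1$, the steps you dismiss as ``check'' --- that $R_1:=\proj_{T_1}T$ is opposite to $R$ in $T_1$, that consequently $\Stab_W(R_1)=\Stab_W(R)$, and that $\ell(R_1,T)<\ell(R,T)$ --- each require real arguments. In sum, (i)$\Leftrightarrow$(ii) and (iii)$\Rightarrow$(ii) are established, but the substantial implication (ii)$\Rightarrow$(iii) is not, so the proposal does not yet constitute a proof of the lemma.
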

\begin{proof}
	This is \cite[Proposition $2.7$]{CM06}.
\end{proof}

\begin{lemma}\label{CM05Lem2.3}
	Let $\alpha \in \Phi$ be a root and let $x, y \in \alpha \cap \mathcal{C}(\partial \alpha)$. Then there exists a minimal gallery $(c_0 = x, \ldots, c_k = y)$ such that $c_i \in \mathcal{C}(\partial^2 \alpha)$ for each $0 \leq i \leq k$. Moreover, for each $1 \leq i \leq k$ there exists $L_i \in \partial^2 \alpha$ with $\{ c_{i-1}, c_i \} \subseteq L_i$.
\end{lemma}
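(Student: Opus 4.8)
The plan is to reduce everything to the rank-$2$ residues crossed by the wall and to chain them together using the parallelism supplied by Lemma \ref{CM06Prop2.7}. We may assume $x \neq y$, the case $x = y$ being trivial. Since $x, y \in \mathcal{C}(\partial \alpha)$, I would fix panels $P, Q \in \partial \alpha$ with $x \in P$ and $y \in Q$. The only reflection stabilizing a panel $\{c, cs\}$ is the one interchanging its two chambers (a reflection fixes no chamber), so for every panel in $\partial \alpha$ that reflection is $r_{\alpha}$; hence $P$ and $Q$ are stabilized by exactly the same reflections and are parallel by Lemma \ref{CM06Prop2.7}. I would then apply Lemma \ref{CM06Prop2.7}(iii) to obtain sequences $R_0 = P, \ldots, R_n = Q$ and $T_1, \ldots, T_n$ of spherical residues with $\mathrm{rank}(T_i) = 2$, where $R_{i-1}, R_i$ are contained and opposite in $T_i$ and $\proj_{T_i} P = R_{i-1}$, $\proj_{T_i} Q = R_i$.

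Next I would verify that the whole chain lives on the wall. Each $R_i$ is a panel, and applying Lemma \ref{CM06Prop2.7}(iii) to the one-step chain $(R_{i-1}, R_i; T_i)$ (whose hypotheses hold since $R_{i-1}, R_i \subseteq T_i$) shows that $R_{i-1}$ and $R_i$ are parallel; as $r_{\alpha}$ stabilizes $R_{i-1}$ it therefore stabilizes $R_i$, and since $R_i$ is a panel this means $R_i \in \partial \alpha$. By induction every $R_i \in \partial \alpha$, so each $R_i$ consists of one chamber $x_i \in \alpha$ and one chamber in $-\alpha$, with $x_0 = x$ and $x_n = y$. Moreover each $T_i$ contains the panel $R_{i-1}$, which has chambers on both sides of the wall, so $T_i \in \partial^2 \alpha$ by Lemma \ref{Lemma: residue and root}. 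Within each $T_i$ I would connect $x_{i-1}$ to $x_i$: any minimal gallery from $x_{i-1}$ to $x_i$ stays in $T_i$ (residues are convex) and in $\alpha$ (roots are convex), hence in $\alpha \cap T_i \subseteq \mathcal{C}(\partial^2 \alpha)$, and all of its steps lie in the single residue $T_i \in \partial^2 \alpha$. Concatenating these pieces for $i = 1, \ldots, n$ yields a gallery from $x$ to $y$ satisfying both asserted properties, provided it is minimal.

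The main point, and the step I expect to require the most care, is the minimality of this concatenation; I would establish it through the additivity $\ell(x, y) = \sum_{i=1}^n \ell(x_{i-1}, x_i)$, which follows from the gate property. The crucial observation is the identification of the projection $\proj_{T_i} x = x_{i-1}$: indeed $\proj_{T_i} P = R_{i-1}$ forces $\proj_{T_i} x \in R_{i-1}$, and since $x \in \alpha$ while the chamber of $R_{i-1}$ lying in $-\alpha$ is separated from $x$ by the wall $\partial \alpha$, the gate must be the $\alpha$-chamber $x_{i-1}$. Applying the gate property to the chamber $x$, the residue $T_i$, and the chamber $x_i \in T_i$ then gives $\ell(x, x_i) = \ell(x, x_{i-1}) + \ell(x_{i-1}, x_i)$; telescoping over $i$ yields $\ell(x, y) = \sum_{i=1}^n \ell(x_{i-1}, x_i)$. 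Since each piece was chosen to be a minimal gallery realizing $\ell(x_{i-1}, x_i)$, the concatenation has length $\ell(x, y)$ and is therefore minimal, which completes the argument.
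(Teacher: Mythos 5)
Your proposal is correct in its main line, and it takes a genuinely different route from the paper for a simple reason: the paper does not prove this lemma at all, it cites \cite[Lemma 2.3]{CM05} and its proof. Your argument makes the statement self-contained modulo results already quoted in the paper: you observe that any two panels of $\partial\alpha$ satisfy condition (ii) of Lemma \ref{CM06Prop2.7} (a panel is stabilized by a unique reflection, since reflections fix no chamber), extract the chain of spherical rank-$2$ residues from Lemma \ref{CM06Prop2.7}(iii), place each $T_i$ in $\partial^2\alpha$ via Lemma \ref{Lemma: residue and root}, and then glue minimal galleries inside the $T_i \cap \alpha$, certifying minimality of the concatenation by identifying $\proj_{T_i}x = x_{i-1}$ and telescoping the gate property. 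This is exactly the mechanism the paper itself uses in Lemma \ref{Lemma: residues in the 2-boundary are parallel}, so your proof fits the paper's toolkit well; what it buys is independence from the external reference \cite{CM05}, at the cost of leaning on \cite[Proposition 2.7]{CM06} (which the paper quotes anyway).

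Two points need patching. First, you assert ``each $R_i$ is a panel'' without proof, and you need it \emph{inside} your induction: applying Lemma \ref{CM06Prop2.7}(iii) to the one-step chain $(R_{i-1}, R_i; T_i)$ requires $\mathrm{rank}(T_i) = 1 + \mathrm{rank}(R_{i-1})$, so you must already know $R_{i-1}$ is a panel before concluding that $R_{i-1}$ and $R_i$ are parallel. The fix is short: given that $R_{i-1}$ is a panel, parallelism transfers the stabilizing reflection $r_\alpha$ to $R_i$ by (i)$\Rightarrow$(ii); moreover $\proj_{T_i}$ maps the two-chamber set $Q$ onto $R_i$, so $\vert R_i \vert \leq 2$, and a single chamber is stabilized by no reflection, whence $R_i$ is a panel (alternatively, quote that projections between parallel residues are mutually inverse bijections, cf.\ \cite[Lemma 5.37]{AB08}). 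Second, the case $x = y$ is not trivial: a gallery of length $0$ still requires $x \in \mathcal{C}(\partial^2\alpha)$, i.e.\ some spherical rank-$2$ residue in $\partial^2\alpha$ containing $x$. Under $2$-sphericity --- the only setting in which the paper invokes this lemma --- any rank-$2$ residue containing the panel $P$ works, by Lemma \ref{Lemma: residue and root}. But for a general Coxeter system this can fail: in the infinite dihedral group one has $\partial^2\alpha = \emptyset$ while $\alpha \cap \mathcal{C}(\partial\alpha) \neq \emptyset$, so the statement in the stated generality breaks exactly at this degenerate case. This is as much a defect of the bare statement as of your proof, but your word ``trivial'' is the one place where an actual hypothesis is being consumed, and it deserves a sentence.
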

\begin{proof}
	This is a consequence of \cite[Lemma $2.3$]{CM05} and its proof.
\end{proof}

\begin{lemma}\label{Lemma: residues in the 2-boundary are parallel}
	Let $\alpha, \beta \in \Phi, \alpha \neq \pm \beta$ be two roots and let $R, T \in \partial^2 \alpha \cap \partial^2 \beta$.
	\begin{enumerate}[label=(\alph*)]
		\item The residues $R$ and $T$ are parallel.
		
		\item If $\vert \langle J \rangle \vert = \infty$ holds for all $J \subseteq S$ containing three elements, then $R=T$.
	\end{enumerate}
\end{lemma}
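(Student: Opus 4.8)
The plan is to prove (a) directly from the gate (projection) property and then deduce (b) from (a) together with Lemma~\ref{CM06Prop2.7}. Throughout, observe that $R, T \in \partial^2 \alpha \cap \partial^2 \beta$ means precisely that both reflections $r_\alpha$ and $r_\beta$ stabilise each of $R$ and $T$, and that $\alpha \neq \pm\beta$ forces $r_\alpha \neq r_\beta$. Since parallelism is symmetric in $R$ and $T$, it suffices to show $\proj_T R = T$.

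For (a), I would study the residue $\proj_T R$, which is contained in $T$ (as recalled in the preliminaries) and hence has rank at most $2$. The key observation is that projection onto $T$ is equivariant under any element of $W$ stabilising $T$: for $r \in \{r_\alpha, r_\beta\}$ one has $\proj_T(rx) = r\,\proj_T x$ for every chamber $x$, which follows directly from the defining gate property of $\proj_T$ and the fact that left multiplication by $r$ preserves gallery distances and fixes $T$ setwise. Consequently $r(\proj_T R) = \proj_T(rR) = \proj_T R$, so $\proj_T R$ is a residue stabilised by the two \emph{distinct} reflections $r_\alpha$ and $r_\beta$. I would then rule out small rank: a rank-$0$ residue is a single chamber, and no reflection fixes a chamber (since $r_\gamma$ interchanges $\gamma$ and $-\gamma$), while a rank-$1$ residue is a panel $\{v, vs\}$, which is stabilised only by $vsv^{-1}$ (a stabilising reflection must interchange the two chambers, and two such reflections would compose to a nontrivial element fixing a chamber, which is impossible). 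Since $\proj_T R$ is stabilised by two distinct reflections, it must have rank $2$, and a rank-$2$ residue contained in the rank-$2$ residue $T$ equals $T$. Thus $\proj_T R = T$, and symmetrically $\proj_R T = R$, so $R$ and $T$ are parallel.

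For (b), I would argue by contradiction: assume $R \neq T$. By (a) the residues are parallel, so Lemma~\ref{CM06Prop2.7} yields sequences $R_0 = R, \ldots, R_n = T$ and $T_1, \ldots, T_n$ of residues of spherical type with $\mathrm{rank}(T_i) = 1 + \mathrm{rank}(R) = 3$. Since $R \neq T$ we have $n \geq 1$, so $T_1$ is a spherical residue of rank $3$; its type is a three-element subset $J \subseteq S$ with $\langle J \rangle$ finite, contradicting the hypothesis that $\vert \langle J \rangle \vert = \infty$ for every such $J$. Hence $R = T$.

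The routine steps are harmless, and I expect the only genuine point requiring care to be the rank reduction in (a): deriving the equivariance $\proj_T(rx) = r\,\proj_T x$ from the gate property, and verifying that a panel admits a unique stabilising reflection. Once these are settled, the exclusion of ranks $0$ and $1$ is immediate and forces $\proj_T R = T$, after which part (b) is a short deduction.
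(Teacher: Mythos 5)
Your proof is correct, and part (a) takes a genuinely different route from the paper's. The paper argues via panels: since $R, T \in \partial^2 \alpha \cap \partial^2 \beta$, each of $R$ and $T$ contains a panel in $\partial \alpha$ and a panel in $\partial \beta$; Lemma \ref{CM06Prop2.7} shows that panels stabilized by the same reflection are parallel, the cited result \cite[Lemma $17$]{DMVM11} shows that a panel $P_i$ and $\proj_T P_i$ are parallel, and hence $\proj_T P_1 \in \partial \alpha$ and $\proj_T P_2 \in \partial \beta$ are two \emph{distinct} panels (using $\alpha \neq \pm\beta$) contained in $\proj_T R$, which forces $\proj_T R$ not to be a panel and therefore to equal $T$. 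You avoid both of these ingredients: the equivariance $\proj_T(rx) = r\,\proj_T x$ for $r$ stabilizing $T$ (a correct and routine consequence of the gate property and the fact that left multiplication is an isometry) shows that $\proj_T R$ is a residue stabilized by the two distinct reflections $r_\alpha \neq r_\beta$, and your rank classification (the left $W$-action is free, so no reflection stabilizes a rank-$0$ residue; a panel admits a unique stabilizing reflection, so rank $1$ is excluded) forces $\proj_T R$ to have rank $2$ and hence equal $T$. Your version of (a) is thus more elementary and self-contained, needing neither Lemma \ref{CM06Prop2.7} nor the external parallelism result, whereas the paper delegates the work to existing machinery; interestingly, the fact that a panel has a unique stabilizing reflection is implicitly needed in the paper's proof as well, in order to apply the equivalence (i)$\Leftrightarrow$(ii) of Lemma \ref{CM06Prop2.7} to the panels $P_i, Q_i$. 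For part (b) the two arguments coincide: parallelism together with Lemma \ref{CM06Prop2.7}(iii) would produce a spherical residue of rank $3$, contradicting the hypothesis. One stylistic remark: your opening claim that ``it suffices to show $\proj_T R = T$'' is justified not because one projection identity implies the other, but because the hypotheses are symmetric in $R$ and $T$, so the same argument applies with the roles swapped --- which is indeed how you conclude.
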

\begin{proof}
	As $R, T \in \partial^2 \alpha \cap \partial^2 \beta$, there exist panels $P_1, Q_1 \in \partial \alpha$ and $P_2, Q_2 \in \partial \beta$ such that $P_1, P_2 \subseteq R$ and $Q_1, Q_2 \subseteq T$ (as in the proof of Lemma \ref{Lemma: residue and root}). By Lemma \ref{CM06Prop2.7} the panels $P_i, Q_i$ are parallel for both $i \in \{1, 2\}$. \cite[Lemma $17$]{DMVM11} yields that $P_i, \proj_T P_i$ are parallel and hence $\proj_T P_1 \in \partial \alpha, \proj_T P_2 \in \partial \beta$ by Lemma \ref{CM06Prop2.7}. As $\alpha \neq \pm \beta$, we deduce $\proj_T P_1 \neq \proj_T P_2$ and hence $\proj_T R$ contains the two different panels $\proj_T P_1$ and $\proj_T P_2$. In particular, $\proj_T R$ is not a panel. Since $\proj_T R$ is a residue contained in $T$, we deduce $\proj_T R = T$. Using similar arguments, we obtain $\proj_R T = R$ and $R, T$ are parallel. This proves $(a)$. Moreover, Lemma \ref{CM06Prop2.7} yields $R=T$, as there are no spherical residues of rank $3$ by assumption. This finishes the proof.
\end{proof}

\subsection*{Reflection and combinatorial triangles in $\Sigma(W, S)$}

A \textit{reflection triangle} is a set $T$ of three reflections such that the order of $tt'$ is finite for all $t, t' \in T$ and such that $\bigcap_{t\in T} \partial^2 \beta_t = \emptyset$, where $\beta_t$ is one of the two roots associated with the reflection $t$. Note that $\partial^2 \beta_t = \partial^2 (-\beta_t)$. A set of three roots $T$ is called \textit{combinatorial triangle} (or simply \textit{triangle}) if the following hold:

\begin{enumerate}[label=(CT\arabic*), leftmargin=*]
	\item The set $\{ r_{\alpha} \mid \alpha \in T \}$ is a reflection triangle.
	
	\item For each $\alpha \in T$, there exists $\sigma \in \partial^2 \beta \cap \partial^2 \gamma$ such that $\sigma \subseteq \alpha$, where $\{ \beta, \gamma \} = T \backslash \{ \alpha \}$.
\end{enumerate}

\begin{lemma}\label{reflectiontrianglechamber}
	Suppose that $(W, S)$ is $2$-spherical and the Coxeter diagram is the complete graph. If $T$ is a triangle, then $(-\alpha, \beta) = \emptyset$ holds for all $\alpha \neq \beta \in T$.
\end{lemma}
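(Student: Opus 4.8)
The plan is to argue by contradiction, reducing everything to the single rank-$2$ residue produced by (CT2). Fix $\alpha\neq\beta\in T$ and write $T=\{\alpha,\beta,\gamma\}$. First I would check that $\{-\alpha,\beta\}$ is prenilpotent, so that the interval $(-\alpha,\beta)$ is defined (if it is not, there is nothing to prove). By (CT2) applied to $\gamma$ there is a residue $\sigma\in\partial^2\alpha\cap\partial^2\beta$ with $\sigma\subseteq\gamma$; since the two distinct walls $\partial\alpha$ and $\partial\beta$ both cross $\sigma$, a look at the rank-$2$ (dihedral) residue $\sigma$ shows that each of the four sets $(\pm\alpha)\cap(\pm\beta)$ meets $\sigma$. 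In particular $(-\alpha)\cap\beta\neq\emptyset\neq\alpha\cap(-\beta)$, as required.

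The key reduction is that every $\eta\in[-\alpha,\beta]$ satisfies $\sigma\in\partial^2\eta$. Indeed, choose $c\in\sigma\cap(-\alpha)\cap\beta$ and $d\in\sigma\cap\alpha\cap(-\beta)$; then $c\in\eta$ and $d\in-\eta$ by definition of the interval, and since $\sigma$ is convex a minimal gallery from $c$ to $d$ stays in $\sigma$ and must cross $\partial\eta$, giving $\sigma\in\partial^2\eta$. Now suppose $(-\alpha,\beta)\neq\emptyset$ and pick $\delta\in(-\alpha,\beta)$; then $r_\delta\notin\{r_\alpha,r_\beta\}$ and $\sigma\in\partial^2\alpha\cap\partial^2\beta\cap\partial^2\delta$. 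At this point I would locate the other two (CT2)-residues $\sigma_\alpha\in\partial^2\beta\cap\partial^2\gamma$ (with $\sigma_\alpha\subseteq\alpha$) and $\sigma_\beta\in\partial^2\alpha\cap\partial^2\gamma$ (with $\sigma_\beta\subseteq\beta$) relative to $\delta$. Here the hypotheses enter for the first time: since $(W,S)$ is $2$-spherical with complete diagram, every three-element $J\subseteq S$ has $\langle J\rangle$ infinite, so Lemma \ref{Lemma: residues in the 2-boundary are parallel}(b) applies and any two distinct walls cross in at most one rank-$2$ residue. As $\sigma_\alpha\neq\sigma$ (one lies in $\alpha$, the other on $\partial\alpha$), were $\sigma_\alpha\in\partial^2\delta$ we would get $\sigma_\alpha\in\partial^2\beta\cap\partial^2\delta=\{\sigma\}$, a contradiction; hence $\sigma_\alpha\notin\partial^2\delta$, and since $\sigma_\alpha$ meets $\alpha\cap(-\beta)\subseteq-\delta$ we conclude $\sigma_\alpha\subseteq-\delta$. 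Symmetrically $\sigma_\beta\subseteq\delta$.

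To finish I would turn this into a subdivision of the triangle. I claim $\{\alpha,\delta,\gamma\}$ is again a combinatorial triangle: its three walls cross pairwise (at $\sigma$, at $\sigma_\beta$, and at a residue $L\in\partial^2\delta\cap\partial^2\gamma$ obtained by joining $\sigma_\alpha\subseteq-\delta$ to $\sigma_\beta\subseteq\delta$ along $\partial\gamma$ via Lemma \ref{CM05Lem2.3} and observing that the resulting gallery must cross $\partial\delta$); it satisfies (CT1) because any common residue would lie in $\partial^2\alpha\cap\partial^2\delta=\{\sigma\}$, whereas $\sigma\subseteq\gamma$ forbids $\sigma\in\partial^2\gamma$; and it satisfies (CT2) using $\sigma\subseteq\gamma$, $\sigma_\beta\subseteq\delta$, together with the (to be checked) fact $L\subseteq\alpha$. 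Since $\partial\delta$ lies strictly between $\partial\alpha$ and $\partial\beta$ inside $\sigma$, the interval $(-\alpha,\delta)$ is a proper sub-interval of $(-\alpha,\beta)$, so passing to a counterexample $T$ with $|(-\alpha,\beta)|$ minimal forces $\delta$ to be adjacent to both $\alpha$ and $\beta$ in $\sigma$, i.e.\ exactly one intermediate wall.

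The main obstacle is this final base case: ruling out a triangle whose interior is split by a single wall $\partial\delta$ through the vertex $\sigma$, that is, showing that a reflection triangle cannot be subdivided at a vertex. This is the combinatorial shadow of the Gauss--Bonnet fact that in the rank-$3$ models a reflection triangle is a single fundamental domain, and it is precisely where completeness of the diagram is indispensable: it is what makes Lemma \ref{Lemma: residues in the 2-boundary are parallel}(b) available and pins the residues $\sigma,\sigma_\alpha,\sigma_\beta,L$ on $\partial\gamma$ rigidly enough to force one of the forbidden coincidences (either $\sigma\in\partial^2\gamma$, contradicting $\sigma\subseteq\gamma$, or a residue common to $\partial\alpha,\partial\beta,\partial\gamma$, contradicting (CT1)). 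I expect the verification of $L\subseteq\alpha$ and the orientation bookkeeping for the sub-triangle to be routine, and the genuine difficulty to be concentrated entirely in excluding this single-wall subdivision.
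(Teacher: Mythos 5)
There is a genuine gap, and you name it yourself: your argument is a reduction by induction on $\lvert(-\alpha,\beta)\rvert$ whose base case --- ruling out a triangle whose interior is crossed by exactly one wall $\partial\delta$ through the residue $\sigma$ --- is left entirely unproven. That base case is not a routine boundary check; it \emph{is} the lemma (for a minimal counterexample the statement $(-\alpha,\beta)=\emptyset$ and the statement ``no single-wall subdivision'' coincide), so the reduction does not shrink the difficulty, it only relocates it. Your appeal to a ``Gauss--Bonnet'' heuristic and to Lemma \ref{Lemma: residues in the 2-boundary are parallel}$(b)$ ``pinning the residues rigidly enough to force a forbidden coincidence'' is not an argument: the residues $\sigma$, $\sigma_\alpha$, $\sigma_\beta$, $L$ are four distinct rank-$2$ residues along three distinct walls, and nothing you have written produces either $\sigma\in\partial^2\gamma$ or a residue common to all three walls from the mere existence of $\delta$. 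Note also that the paper itself does not prove this lemma internally; its proof is a citation to \cite[Proposition $2.3$]{Bi22}, so the content you are missing is exactly the substance of that external result, and there is no shortcut in the present paper's toolkit (Lemmas \ref{Lemma: residue and root}--\ref{Lemma: residues in the 2-boundary are parallel}) that closes it.

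The machinery you do set up is largely sound: the four quadrants $(\pm\alpha)\cap(\pm\beta)$ do meet $\sigma$; every $\eta\in[-\alpha,\beta]$ does satisfy $\sigma\in\partial^2\eta$ by convexity of $\sigma$; $\sigma_\alpha\subseteq-\delta$ and $\sigma_\beta\subseteq\delta$ follow correctly from Lemma \ref{Lemma: residues in the 2-boundary are parallel}$(b)$ and Lemma \ref{Lemma: residue and root}; and the strict decrease $(-\alpha,\delta)\subsetneq(-\alpha,\beta)$ can be salvaged (if $\beta\in(-\alpha,\delta)$ and $\delta\in(-\alpha,\beta)$ one checks $\beta$ and $\delta$ have the same intersection with both $\alpha$ and $-\alpha$, forcing $\beta=\delta$). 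But two steps needed for the sub-triangle $\{\alpha,\delta,\gamma\}$ remain unverified --- the existence of $L\in\partial^2\delta\cap\partial^2\gamma$ with $L\subseteq\alpha$ (your gallery construction gives $L$ on the two walls, but $L\subseteq\alpha$ needs the convexity-plus-trichotomy argument spelled out, as in the proof of Proposition \ref{auxres}) --- and, decisively, the base case. As it stands the proposal is an honest partial reduction, not a proof.
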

\begin{proof}
	This is \cite[Proposition $2.3$]{Bi22}.
\end{proof}

\begin{proposition}\label{auxres}
	Assume that $(W, S)$ is $2$-spherical and the Coxeter diagram is the complete graph. Let $R \neq T$ be two residues of rank $2$ such that $P := R \cap T$ is a panel. If $\ell(1_W, \proj_R 1_W) < \ell(1_W, \proj_T 1_W)$, then $\proj_T 1_W = \proj_P 1_W$.
\end{proposition}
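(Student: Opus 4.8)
\textit{Proof idea.}
The plan is to write $c := 1_W$ and set $r:=\proj_R c$, $t:=\proj_T c$, $p:=\proj_P c$. Since $P\subseteq R$ and $P\subseteq T$, the gate property gives $p=\proj_P r=\proj_P t$ together with $\ell(c,p)=\ell(c,r)+\ell(r,p)=\ell(c,t)+\ell(t,p)$; in particular the hypothesis $\ell(c,r)<\ell(c,t)$ is equivalent to $\ell(t,p)<\ell(r,p)$. As $t\in P$ would give $\proj_P t=t$, the claim $\proj_T c=\proj_P c$ is equivalent to $t=p$, i.e.\ to $\ell(t,p)=0$. I would fix the root $\alpha$ with $P\in\partial\alpha$ and $c\in\alpha$; then $p\in\alpha$, and since $P\subseteq R\cap T$ forces $R,T\in\partial^2\alpha$ by Lemma \ref{Lemma: residue and root}, the gates satisfy $r,t\in\alpha$ as well. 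From $\ell(t,p)<\ell(r,p)$ we get $r\neq p$, and I argue by contradiction, assuming $t\neq p$.

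Write $\{s,a\}$ for the type of $R$, $\{s,b\}$ for the type of $T$ and $\{s\}$ for the type of $P$. Because $r,p\in\alpha$, a minimal gallery from $r$ to $p$ inside $R$ does not cross $\partial\alpha$, so it enters $p$ through the $a$-panel $P_a=\{p,pa\}$; hence $\ell(c,pa)=\ell(c,p)-1$. Assuming $t\neq p$, the same argument in $T$ gives $\ell(c,pb)=\ell(c,p)-1$ for the $b$-panel $P_b=\{p,pb\}$. Thus in the residue $L$ of type $\{a,b\}$ through $p$ — which is spherical since $(W,S)$ is $2$-spherical — both neighbours of $p$ are nearer to $c$, so $p$ is the chamber of $L$ opposite $\proj_L c$. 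Now the three rank-$2$ residues $R,T,L$ pairwise meet in the panels $P,P_a,P_b$ of $p$. Since the diagram is complete and $2$-spherical, no spherical residue of rank $3$ exists (a finite rank-$3$ Coxeter group always has a pair of commuting generators, impossible here), so Lemma \ref{Lemma: residues in the 2-boundary are parallel}(b) applies: a rank-$2$ residue lying in two of the relevant $2$-boundaries is uniquely determined. Using this I would check that the three roots $\alpha$ (through $P$), $\beta$ (through $P_a$, oriented so that $T\subseteq\beta$) and $\gamma$ (through $P_b$, oriented so that $R\subseteq\gamma$) form a combinatorial triangle: (CT1) holds because a common residue in $\partial^2\alpha\cap\partial^2\beta\cap\partial^2\gamma$ would equal both $R$ and $T$, forcing $R=T$; (CT2) holds with the witnessing residues $L,T,R$ for $\alpha,\beta,\gamma$ respectively (and one checks $L\subseteq\alpha$).

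The main obstacle is the final step. With this orientation one locates the four chambers in the octants cut out by the triangle, namely $c\in\alpha\cap(-\beta)\cap(-\gamma)$, $r\in\alpha\cap(-\beta)\cap\gamma$, $t\in\alpha\cap\beta\cap(-\gamma)$ and $p\in\alpha\cap\beta\cap\gamma$. By the standard description of a gate, $\ell(c,r)$ counts the walls $\partial\mu$ with $c\in\mu$ and $R\subseteq-\mu$, and likewise $\ell(c,t)$ counts the walls with $T$ on the far side. I would then invoke Lemma \ref{reflectiontrianglechamber}, which gives $(-\rho,\tau)=\emptyset$ for all distinct $\rho,\tau\in\{\alpha,\beta,\gamma\}$, to control these two families: the emptiness of the relevant root intervals is what prevents the walls separating $c$ from $t$ from outnumbering those separating $c$ from $r$, yielding $\ell(c,r)\ge\ell(c,t)$. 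This contradicts the hypothesis, whence $t=p$. I expect that converting the emptiness of $(-\rho,\tau)$ into the inequality $\ell(c,r)\ge\ell(c,t)$ — rather than the bare bookkeeping identity $\ell(c,r)-\ell(c,t)=\ell(t,p)-\ell(r,p)$, which by itself says nothing — will be the technical heart of the argument, and is the place where completeness of the diagram is used in full force.
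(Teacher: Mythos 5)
The gap you flag at the end of your proposal is genuine, and with your choice of roots it cannot be closed. Your $\beta$ and $\gamma$ are the walls through the two panels $P_a$, $P_b$ of $p:=\proj_P 1_W$, i.e.\ the walls crossed \emph{last} by the minimal galleries from the gates $r=\proj_R 1_W$, $t=\proj_T 1_W$ to $p$, oriented so that both roots contain $p$. Lemma \ref{reflectiontrianglechamber} only controls the mixed intervals $(-\rho,\tau)$, and for your triangle these see none of the walls you need: for a root $\delta$ with $R\in\partial^2\delta$ to lie in $[-\alpha,\beta]$, its wall would have to separate $(-\alpha)\cap\beta$ from $\alpha\cap(-\beta)$ inside $R$, and since $\partial\alpha$ and $\partial\beta$ are the walls of two panels of the \emph{same} chamber $p$ of $R$, the only walls of $R$ doing this are $\partial\alpha$ and $\partial\beta$ themselves — independently of $\ell(r,p)$. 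The walls separating $r$ from $p$, which are exactly what must be bounded, lie instead in the interval $(\alpha,\beta)$ with both endpoints containing $p$, about which the lemma is silent. There is also a structural obstruction: your triangle is built using only $r\neq p$ and $t\neq p$; the hypothesis $\ell(1_W,r)<\ell(1_W,t)$ enters nowhere. But configurations with $r\neq p\neq t$ and $\ell(1_W,r)=\ell(1_W,t)$ genuinely occur (in type $\tilde{A}_2$ take $p=s_1s_2s_1$ and $P$ its $s_3$-panel: both gates $s_1s_2$ and $s_2s_1$ are adjacent to $p$), and there your triangle exists and Lemma \ref{reflectiontrianglechamber} holds with no contradiction available. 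So ``triangle plus lemma'' can never by itself yield the contradiction; any completion must re-inject the strict inequality, which your data does not encode.

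The paper's proof chooses different roots precisely to avoid this: $\gamma$ is the wall crossed \emph{immediately after} the gate $\proj_R 1_W$ on the way to $\proj_P 1_W$ inside $R$ (and $\beta$ likewise in $T$), and the triangle formed is $\{\alpha,-\beta,-\gamma\}$. With this orientation the walls of $R$ crossed strictly between the first crossing and $\partial\alpha$ all lie in $(\gamma,\alpha)$, so the conclusion $(\alpha,\gamma)=\emptyset$ of Lemma \ref{reflectiontrianglechamber} forces the gallery from $\proj_R 1_W$ to $\proj_P 1_W$ to have length one, whence $\ell(1_W,\proj_R 1_W)=\ell(1_W,\proj_P 1_W)-1\geq\ell(1_W,\proj_T 1_W)$, contradicting the hypothesis. (Note that when $\ell(r,p)=\ell(t,p)=1$ your walls and the paper's coincide; the difference matters exactly in the cases to be excluded.) Two smaller remarks: your verification of (CT1) omits the requirement that the pairwise products of the three reflections have finite order — in your setup this is in fact immediate, since $r_\alpha=psp^{-1}$, $r_\beta=pap^{-1}$, $r_\gamma=pbp^{-1}$ and $2$-sphericity applies, whereas in the paper's setup this is a delicate step proved by contradiction; and if you repair your argument by passing to the first-crossed walls whenever $\ell(r,p)\geq 2$, you are no longer simplifying but reproducing the paper's proof.
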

\begin{proof}
	We let $\alpha \in \Phi_+$ be the root with $P \in \partial \alpha$. Let $(c_0 = 1_W, \ldots, c_{k'} = \proj_P c_0)$ be a minimal gallery with $c_k = \proj_R c_0$ for some $0 \leq k \leq k'$ and $c_k, \ldots, c_{k'} \in R$.
	
	We assume that $\proj_T c_0 \neq \proj_P c_0$ holds. Then we have $k' > \ell(1_W, \proj_T 1_W) > \ell(1_W, \proj_R 1_W) = k$. Let $(d_0 = 1_W, \ldots, d_{m'} = \proj_P d_0)$ be a minimal gallery with $d_m = \proj_T c_0$ for some $0 \leq m \leq m'$ and $d_m, \ldots, d_{m'} \in T$. We let $\beta \in \Phi_+$ be the root with $\{ d_m, d_{m+1} \} \in \partial \beta$ and we let $\gamma \in \Phi_+$ be the root with $\{ c_k, c_{k+1} \} \in \partial \gamma$. We will show that $\{ \alpha, -\beta, -\gamma \}$ is a triangle. Thus we first show that $\{ r_{\alpha}, r_{\beta}, r_{\gamma} \}$ is a reflection triangle. We have $T \in \partial^2 \alpha \cap \partial^2 \beta$ and, as a minimal gallery crosses a wall at most once, we deduce $\alpha \neq \beta$. Note that the wall $\partial \beta$ is crossed by the minimal gallery $(c_0, \ldots, c_{k'})$. Since $\partial^2 \alpha \ni R \neq T \in \partial^2 \alpha \cap \partial^2 \beta$ and $\alpha \neq \pm \beta$, Lemma \ref{Lemma: residues in the 2-boundary are parallel}$(b)$ implies $R \notin \partial^2 \beta$ and hence $\partial \beta$ is crossed by $(c_0, \ldots, c_k)$. As $k < k'$, we have $\proj_R 1_W \neq \proj_P 1_W$ and hence $\alpha \neq \gamma$. As $\alpha, \gamma \in \Phi_+$, we have $\alpha \neq \pm \gamma$.
	
	Assume that $o(r_{\beta} r_{\gamma}) = \infty$. We deduce $\beta \subseteq \gamma$. But $\partial\gamma$ has to be crossed by the gallery $(d_0, \ldots, d_{m'})$. Since $\partial^2 \alpha \ni T \neq R \in \partial^2 \alpha \cap \partial^2 \gamma$ and $\alpha \neq \pm \gamma$, we have $T \notin \partial \gamma^2$ by Lemma \ref{Lemma: residues in the 2-boundary are parallel}$(b)$ as before. This implies that $(d_0, \ldots, d_m)$ crosses the wall $\partial \beta$ and hence $\gamma \subseteq \beta$. This yields a contradiction and we have $o(r_{\beta} r_{\gamma}) <\infty$.
	
	As $R \in \partial^2 \alpha \cap \partial^2 \gamma$, Lemma \ref{Lemma: residues in the 2-boundary are parallel}$(b)$ implies $\partial^2 \alpha \cap \partial^2 \gamma = \{ R \}$. As $R \notin \partial^2 \beta$, we deduce $\partial^2 \alpha \cap \partial^2 \beta \cap \partial^2 \gamma = \emptyset$ and hence $\{ r_{\alpha}, r_{\beta}, r_{\gamma} \}$ is a reflection triangle.
	
	Now we have to verify (CT2). As $\partial^2 \gamma \not\ni T \in \partial^2 \alpha \cap \partial^2 \beta$ and $P \subseteq T \cap (-\gamma)$, we have $T \subseteq (-\gamma)$ by Lemma \ref{Lemma: residue and root}. As $\partial^2 \beta \not\ni R \in \partial^2 \alpha \cap \partial^2 \gamma$ and $P \subseteq R \cap (-\beta)$, we have $R \subseteq (-\beta)$. Let $1 \leq i \leq k$ be such that $\{ c_{i-1}, c_i \} \in \partial \beta$. Note that $\{ d_m, d_{m+1} \} \in \partial \beta, d_{m+1} \in (-\beta) \cap T \subseteq (-\gamma)$ and $c_i \in (-\beta) \cap \gamma$. By Lemma \ref{CM05Lem2.3} there exists a minimal gallery $(e_0 = d_{m+1}, \ldots, e_z = c_i)$ such that $e_j \in \C(\partial^2 \beta)$. As $d_{m+1} \in (-\gamma)$ and $c_i \in \gamma$, there exists $1 \leq p \leq z$ such that $e_{p-1} \in (-\gamma)$ and $e_p \in \gamma$. Again by Lemma \ref{CM05Lem2.3} there exists $L \in \partial^2 \beta$ such that $\{ e_{p-1}, e_p \} \subseteq L$, and hence $L \in \partial^2 \beta \cap \partial^2 \gamma$. As roots are convex and $e_0 = d_{m+1}, e_z = c_i \in \alpha$, we have $e_p \in L \cap \alpha$. As $\{ r_{\alpha}, r_{\beta}, r_{\gamma} \}$ is a reflection triangle (and hence $L \notin \partial^2 \alpha$), we obtain $L \subseteq \alpha$ by Lemma \ref{Lemma: residue and root}. This implies that $\{ \alpha, -\beta, -\gamma \}$ is a triangle and hence $(\alpha, \gamma) = \emptyset$ holds by Lemma \ref{reflectiontrianglechamber}. In particular, $k+1 = k'$ and $\ell(1_W, \proj_R 1_W) = \ell(1_W, \proj_P 1_W) -1 \geq \ell(1_W, \proj_T 1_W)$. This is a contradiction to the assumption and we conclude $\proj_T 1_W = \proj_P 1_W$.
\end{proof}

\begin{corollary}\label{Corollary: second up}
	Assume that $(W, S)$ is $2$-spherical and that the underlying Coxeter diagram is the complete graph. Suppose $w\in W$ and $s\neq t \in S$ with $\ell(ws) = \ell(w) +1 = \ell(wt)$ and suppose $w' \in \langle s, t \rangle$ with $\ell(w') \geq 2$. Then we have $\ell(ww'r) = \ell(w) + \ell(w') +1$ for each $r\in S \backslash \{ s, t \}$.
\end{corollary}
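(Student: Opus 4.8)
The plan is to convert the statement into a descent condition and prove it by induction on $\ell(w')$, with Proposition \ref{auxres} as the main engine. First I would record that $\ell(ws)>\ell(w)$ and $\ell(wt)>\ell(w)$ say exactly that $w$ is the gate $\proj_{R_0} 1_W$ of the rank-$2$ residue $R_0 := w\langle s,t\rangle$; hence $\ell(ww')=\ell(w)+\ell(w')$ for every $w'\in\langle s,t\rangle$ by the gate property. Writing $v:=ww'$, the claimed identity $\ell(ww'r)=\ell(w)+\ell(w')+1$ is then equivalent to ``$r$ ascends from $v$'', i.e.\ $\ell(vr)=\ell(v)+1$. So I would fix $r\in S\setminus\{s,t\}$ and assume for contradiction that $\ell(vr)<\ell(v)$.

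For the geometric setup, I would choose a generator $a\in\{s,t\}$ that descends from $v$ (one exists since $\ell(w')\geq 1$, as the right descent of $w'$ in $\langle s,t\rangle$ is a descent of $v$), and let $b$ be the other element of $\{s,t\}$. Consider the two distinct rank-$2$ residues $R_0=v\langle a,b\rangle$ and $T:=v\langle a,r\rangle$, which meet in the $a$-panel $Q:=v\langle a\rangle=R_0\cap T$; since $a$ descends, $\proj_Q 1_W=va$. Because $a$ and $r$ both descend from $v$, the chamber $v$ is opposite the gate in the spherical residue $T$ (here $2$-sphericity guarantees $m_{ar}<\infty$), so $\ell(\proj_T 1_W)=\ell(v)-m_{ar}$, while $\ell(\proj_{R_0} 1_W)=\ell(w)=\ell(v)-\ell(w')$. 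Comparing these gate-lengths and feeding the pair $R_0,T$ across $Q$ into Proposition \ref{auxres} settles two of three cases immediately: if $\ell(w')<m_{ar}$, then $T$ is strictly closer and the proposition forces $\proj_{R_0} 1_W=\proj_Q 1_W=va$, i.e.\ $\ell(w')=1$, contradicting $\ell(w')\geq 2$; if $\ell(w')>m_{ar}$, then $R_0$ is strictly closer and the proposition forces $\proj_T 1_W=va$, i.e.\ $m_{ar}=1$, absurd since the diagram is complete.

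The main obstacle is the boundary case $\ell(w')=m_{ar}$, where the two gates are equidistant and Proposition \ref{auxres} gives nothing directly; this is precisely what the induction is for. The base case $\ell(w')=2$ never meets this boundary, since $m_{ar}\geq 3$, and is therefore disposed of by the argument above. For the inductive step I would use that $a$ is a right descent of $w'$, so that $w'':=w'a\in\langle s,t\rangle$ satisfies $\ell(w'')=\ell(w')-1=m_{ar}-1\geq 2$ and $va=ww''$ still has gate $w$ for $R_0$. The inductive hypothesis applied to $w''$ and the same $r$ then yields $\ell(var)=\ell(va)+1=\ell(v)$. This contradicts a direct dihedral computation inside $T$: since $v$ is opposite the gate, $(\proj_T 1_W)^{-1}v=r_{\{a,r\}}$ is the longest element of $\langle a,r\rangle$, whence $\ell(var)=\ell(\proj_T 1_W)+\ell(r_{\{a,r\}}ar)=(\ell(v)-m_{ar})+(m_{ar}-2)=\ell(v)-2$. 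The contradiction $\ell(v)=\ell(v)-2$ eliminates the boundary case and completes the induction, establishing that $r$ ascends from $v$ and hence the stated equality.
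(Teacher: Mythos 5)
Your proof is correct, and it runs on the same engine as the paper's --- Proposition \ref{auxres} --- but deploys it in a genuinely different way. The paper argues by contradiction at the \emph{bottom} of the residue $w\langle s,t\rangle$: assuming $\ell(ww'r)=\ell(ww')-1$, it first propagates the descent of $r$ down the gallery (``one easily sees that $\ell(wstr)=\ell(wst)-1$ and $\ell(wsr)=\ell(ws)-1$''), then applies Proposition \ref{auxres} to $R:=R_{\{r,t\}}(ws)$ and $T:=R_{\{s,t\}}(w)$, which meet in the panel $P:=\P_t(ws)$; the two propagated descents put the gate of $R$ at gallery distance $m_{rt}-1$ from $ws$, so $m_{rt}\geq 3$ yields the strict gate inequality, and the resulting $\proj_T 1_W=\proj_P 1_W$ contradicts $\proj_T 1_W=w\neq ws=\proj_P 1_W$. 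You instead apply Proposition \ref{auxres} at the \emph{top} chamber $v=ww'$, to $R_0=w\langle s,t\rangle$ and $T=v\langle a,r\rangle$ meeting in the $a$-panel of $v$, with the comparison of $\ell(w')$ against $m_{ar}$ replacing the paper's propagation step, and you handle the equidistant case $\ell(w')=m_{ar}$ --- which has no analogue in the paper's setup --- by induction. It is worth noting that your boundary-case dihedral computation (if $r$ descends from $v$, then $v$ is opposite the gate of $T$ and $\ell(var)=\ell(v)-2$, i.e.\ the descent propagates one step down) is exactly the fact hiding behind the paper's ``one easily sees''; your induction therefore makes rigorous precisely the step the paper leaves to the reader, which is a virtue of your write-up. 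In exchange it is slightly redundant: once the induction is running, that dihedral computation together with the inductive hypothesis already disposes of \emph{every} case with $\ell(w')\geq 3$ (not just the boundary case), so the trichotomy via Proposition \ref{auxres} is only needed for the base case $\ell(w')=2$, where $m_{ar}\geq 3$ guarantees you are in the first branch; trimming it this way would give an argument shorter than either version.
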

\begin{proof}
	Suppose $r\in S \backslash \{s, t\}$ and assume that $\ell(ww'r) = \ell(ww') -1$ holds for some $w' \in \langle s, t \rangle$ with $\ell(w') \geq 2$. Suppose $w'$ starts with $s$, i.e.\ $w' = sw''$ for some $w'' \in \langle s, t \rangle$ with $\ell(w'') = \ell(w') -1$. As $\ell(ww'r) = \ell(ww') -1$, one easily sees that $\ell(wstr) = \ell(wst) -1$ and $\ell(wsr) = \ell(ws) -1$ hold, too. We define $R := R_{\{r, t\}}(ws), T := R_{\{s, t\}}(w)$ and $P := R \cap T = \P_t(ws)$. Clearly, $\proj_T 1_W \neq \proj_P 1_W$. As $m_{rt} \geq 3$, we deduce $\ell(1_W, \proj_R 1_W) < \ell(1_W, \proj_T 1_W)$ and Proposition \ref{auxres} yields a contradiction.
\end{proof}

\begin{lemma}\label{Lemma: not both down}
	Assume that $(W, S)$ is $2$-spherical and that $m_{st} \geq 4$ holds for all $s\neq t \in S$. Suppose $w \in W$ and $s\neq t \in S$ with $\ell(ws) = \ell(w) +1 = \ell(wt)$. Then we have $\ell(w) +2 \in \{ \ell(wsr), \ell(wtr) \}$ for all $r\in S \backslash \{s, t\}$.
\end{lemma}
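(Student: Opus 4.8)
The plan is to argue by contradiction. Since $\ell(ws) = \ell(wt) = \ell(w)+1$, each of $\ell(wsr)$ and $\ell(wtr)$ equals either $\ell(w)$ or $\ell(w)+2$, so the negation of the claim is that \emph{both} equal $\ell(w)$, i.e.\ $\ell(wsr) = \ell(ws)-1$ and $\ell(wtr) = \ell(wt)-1$. Assuming this, I would exhibit a single chamber whose distance to $1_W$ is forced to take two incompatible values. Note first that the hypothesis $m_{xy}\geq 4$ for all pairs makes the Coxeter diagram complete, so Corollary \ref{Corollary: second up} is available.

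First I would exploit the rank-$2$ residue $R := R_{\{s,r\}}(w)$. From $\ell(wss) = \ell(w) < \ell(ws)$ and $\ell(wsr) < \ell(ws)$ both generators $s,r$ shorten $ws$; in a spherical rank-$2$ residue the only chamber with this property is the one opposite the gate $g := \proj_R 1_W$. Hence $g^{-1}(ws) = r_{\{s,r\}}$ and $\ell(g) = \ell(ws) - m_{sr} = \ell(w)+1-m_{sr}$. Following the gallery $ws, w, wr, wrs, wrsr$ obtained by right multiplication with $s,r,s,r$ starting at the cogate $ws$, each step strictly decreases the distance to $g$; as $m_{sr}\geq 4$ all four steps are descents, so $g^{-1}(wrsr) = r_{\{s,r\}}srsr$ has length $m_{sr}-4$ and therefore $\ell(wrsr) = \ell(g) + (m_{sr}-4) = \ell(w)-3$. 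Along the way this shows $\ell(wrs) < \ell(wr)$. Running the identical argument in $R_{\{t,r\}}(w)$ (using $\ell(wtr) < \ell(wt)$) shows that $wt$ is opposite the gate there and that $\ell(wrt) < \ell(wr)$.

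The decisive step is to pass to the residue $Q := R_{\{s,t\}}(wr)$. By the previous paragraph both $s$ and $t$ shorten $wr$, so $wr$ is opposite the gate $b := \proj_Q 1_W$ of $Q$; thus $b^{-1}(wr) = r_{\{s,t\}}$ and $\ell(b) = \ell(wr) - m_{st} = \ell(w)-1-m_{st}$. Now $s$ and $t$ both lengthen $b$, so Corollary \ref{Corollary: second up} applies at $b$. I would apply it with $w' := r_{\{s,t\}}\,s \in \langle s,t\rangle$, which satisfies $\ell(w') = m_{st}-1 \geq 2$ because $m_{st}\geq 4$, and with the given $r$. Since $b\,r_{\{s,t\}} = wr$, we have $bw' = wrs$ and $bw'r = wrsr$, whence $\ell(wrsr) = \ell(bw'r) = \ell(b) + \ell(w') + 1 = \ell(w)-1$. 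This contradicts the value $\ell(w)-3$ found above, and the lemma follows.

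I expect the only real obstacle to be recognizing which element to evaluate. The two descent hypotheses are used precisely to turn $wr$ into the cogate of $R_{\{s,t\}}(wr)$, and it is this configuration --- two ascents from the gate $b$ --- that unlocks Corollary \ref{Corollary: second up}. Once $wrsr$ is pinned down as the element computed in two ways, everything else is routine dihedral bookkeeping inside rank-$2$ residues. The hypothesis $m_{xy}\geq 4$ for every pair is exactly what provides the room: it makes $ws$ and $wt$ genuine cogates with a four-step gallery to spare, and it guarantees $\ell(w')\geq 2$ so that the corollary is applicable.
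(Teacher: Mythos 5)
Your proof is correct and takes essentially the same approach as the paper: assume both $\ell(wsr)$ and $\ell(wtr)$ equal $\ell(w)$, pin down the lengths of $wr$, $wrs$, $wrt$, $wrsr$ by gate/cogate bookkeeping in spherical rank-$2$ residues (this is exactly where $m_{st}\geq 4$ enters), and then derive a contradiction from Corollary \ref{Corollary: second up}. The only difference is the final configuration: the paper applies the corollary at the gate of the $\{r,s\}$-residue of $w$ with third generator $t$, contradicting $\ell(wrt)=\ell(wr)-1$, whereas you apply it at the gate of $R_{\{s,t\}}(wr)$ with third generator $r$, contradicting $\ell(wrsr)=\ell(w)-3$; both are equally valid.
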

\begin{proof}
	Assume that $\ell(wsr) = \ell(w) = \ell(wtr)$. Then $\ell(wr) = \ell(w)-1$ and $\ell(wrs) = \ell(w) -2 = \ell(wrt)$. Let $R$ be the $\{ r, s \}$ residue containing $w$. As $m_{rs} \geq 4$, we deduce $\ell(wrsr) = \ell(wrs) -1$. Let $w' \in \langle s, t \rangle$ be such that $wr = (\proj_R 1_W) w'$. Then $\ell(w') \geq 2$ and the previous corollary implies $\ell(wrt) = \ell(wr) +1$, which is a contradiction. This finishes the proof.
\end{proof}

\begin{remark}
	Note that Lemma \ref{Lemma: not both down} is false without the assumption $m_{st} \geq 4$. To see this one can consider the Coxeter system of type $\tilde{A}_2$.
\end{remark}

\section{Some (in-)equalities}

To show the two main results (Theorem \ref{Theorem: convergence of growth function} and \ref{Theorem: divergence of growth function}), we will apply the quotient criterion. In order to do so we need a few inequalities, which we establish in this and the next section. We recall that for $i\in \NN$ we have
\begin{itemize}[label=$\bullet$]
	\item $C_i := \{ w\in W \mid \ell(w) = i \}$ and $c_i := \vert C_i \vert$;
	
	\item $D_i := \{ w\in C_i \mid \exists! s\in S: \ell(ws) < \ell(w) \}$ and $d_i := \vert D_i \vert$;
\end{itemize}

\begin{convention}
	In this section we assume that $(W, S)$ is of rank $n \geq 3$ and that there exists $m\geq 3$ such that $m_{st} = m$ holds for all $s\neq t \in S$. Moreover, we let $i>m$.
\end{convention}

\begin{remark}\label{Remark: unique}
	Note that $(W, S)$ is $2$-spherical and that the underlying Coxeter diagram is the complete graph. In particular, we have $\vert \langle J \rangle \vert = \infty$ for all $J \subseteq S$ containing three elements. This implies that for each $w\in W \backslash \{1_W\}$ there is either a unique element $s_w \in S$ with $\ell(w s_w) = \ell(w) -1$, or else there are exactly two elements $s_w \neq t_w \in S$ with $\ell(w s_w) = \ell(w) -1 = \ell(w t_w)$.
\end{remark}

\begin{lemma}\label{Lemma: ci-di = i-m}
	$c_i - d_i = \begin{pmatrix}
		n-2 \\ 2
	\end{pmatrix} c_{i-m} + (n-2) d_{i-m}$.
\end{lemma}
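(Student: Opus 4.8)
The plan is to show that $c_i - d_i$ counts exactly those $w \in C_i$ having two right descents, and then to parametrise such elements by their minimal coset representatives. For $w \in W$ write $\operatorname{Des}(w) := \{ r \in S \mid \ell(wr) < \ell(w) \}$ for its right descent set. Since every $w \in C_i$ is nontrivial (as $i > m \geq 3$), Remark \ref{Remark: unique} guarantees $|\operatorname{Des}(w)| \in \{1, 2\}$. Hence, setting $E_i := C_i \setminus D_i$ and $e_i := |E_i|$, we have $c_i = d_i + e_i$, so $c_i - d_i = e_i$ and it remains to compute $e_i$.

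For the bijection, take $w \in E_i$ with $\operatorname{Des}(w) = \{s, t\}$ and consider $R := R_{\{s,t\}}(w)$. Let $u := \proj_R 1_W$ be its minimal-length element and write $w = uv$ with $v \in \langle s, t\rangle$ and $\ell(w) = \ell(u) + \ell(v)$. Because $u$ is minimal in its coset, $\ell(ur') = \ell(u) + \ell(r')$ for every $r' \in \langle s, t\rangle$, so $r \in \{s,t\}$ is a descent of $w$ if and only if it is a descent of $v$. As both $s$ and $t$ are descents of $w$, the dihedral element $v$ has both $s$ and $t$ as descents and is therefore the longest element $r_{\{s,t\}}$, of length $m$. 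Thus $w = u \cdot r_{\{s,t\}}$ with $\ell(u) = i - m$ and $\ell(us) = \ell(u) + 1 = \ell(ut)$. This assigns to $w$ the pair $(u, \{s,t\})$, and $w$ is recovered as $u \cdot r_{\{s,t\}}$, so the map is injective.

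Conversely, I claim every pair $(u, \{s,t\})$ with $u \in C_{i-m}$ and $\ell(us) = \ell(u) + 1 = \ell(ut)$ arises this way. Set $w := u \cdot r_{\{s,t\}}$; since $u$ has no descent in $\{s,t\}$ it is minimal in its $\{s,t\}$-coset, whence $\ell(w) = \ell(u) + m = i$, and both $s, t$ are descents of $w$. The point is that $w$ has \emph{no further} descents: applying Corollary \ref{Corollary: second up} with $w' = r_{\{s,t\}}$ (so $\ell(w') = m \geq 2$) yields $\ell(wr) = \ell(u) + m + 1 > \ell(w)$ for every $r \in S \setminus \{s,t\}$, hence $\operatorname{Des}(w) = \{s,t\}$ and $w \in E_i$. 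This surjectivity step is the crux, and it is precisely where the hypothesis of a complete diagram with all $m_{st} = m$ enters, through Corollary \ref{Corollary: second up}.

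Counting the bijection by summing over $u$ rather than over the pair, each $u \in C_{i-m}$ (necessarily nontrivial, since $i - m \geq 1$) contributes $\binom{n - |\operatorname{Des}(u)|}{2}$ admissible pairs $\{s,t\} \subseteq S \setminus \operatorname{Des}(u)$. Splitting according to whether $|\operatorname{Des}(u)| = 1$ or $2$ gives
\[ e_i = \binom{n-1}{2} d_{i-m} + \binom{n-2}{2} e_{i-m}. \]
Substituting $e_{i-m} = c_{i-m} - d_{i-m}$ and using $\binom{n-1}{2} = \binom{n-2}{2} + (n-2)$ rewrites the right-hand side as $\binom{n-2}{2} c_{i-m} + (n-2) d_{i-m}$, which is the claimed formula. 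The only genuine obstacle is the surjectivity argument above; everything else is dihedral coset bookkeeping together with a one-line binomial identity.
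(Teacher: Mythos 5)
Your proof is correct and follows essentially the same route as the paper: both arguments identify $C_i \setminus D_i$ with pairs (minimal coset representative in $C_{i-m}$, two-element set of non-descents) via projection onto the residue spanned by the two descents, split according to $D_{i-m}$ versus $C_{i-m}\setminus D_{i-m}$, and finish with the identity $\binom{n-1}{2}-\binom{n-2}{2}=n-2$. The only cosmetic difference is that you justify the absence of a third descent of $u\,r_{\{s,t\}}$ by invoking Corollary \ref{Corollary: second up}, whereas the paper's fiber count rests on Remark \ref{Remark: unique} (no element has three descents, since rank-three subsets are non-spherical); both justifications are valid under the section's convention.
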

\begin{proof}
	Let $v \in C_i \backslash D_i$ be an element. Then there exist unique $s\neq t \in S$ with $\ell(vs) = \ell(v) -1 = \ell(vt)$. We define $R_v := R_{\{s, t\}}(v)$. Then we consider the mapping
	\[ f: C_i \backslash D_i \to C_{i-m}, v \mapsto \proj_{R_v} 1_W \]
	Note that $C_{i-m} = D_{i-m} \cup C_{i-m} \backslash D_{i-m}$. If $w\in C_{i-m} \backslash D_{i-m}$ is, there are exactly two elements in $S$, say $s_w \neq t_w \in S$ which decrease the length of $w$ (as $i>m$). Any other element $r\in S \backslash \{ s_w, t_w \}$ increases the length of $w$. For $n>3$ and $r_1 \neq r_2 \in S \backslash \{s_w, t_w\}$, we have $f(wr_{\{r_1, r_2\}}) = w$. For $n=3$ we have $f^{-1}(w) = \emptyset$. In both cases $w$ has $\begin{pmatrix}
		n-2 \\ 2
	\end{pmatrix}$ many preimages. If $w \in D_{i-m}$ is, there exists a unique $s_w \in S$ which decreases the length of $w$ and (similarly as before) $w$ has $\begin{pmatrix}
		n-1 \\ 2
	\end{pmatrix}$ many preimages. Note that $\begin{pmatrix}
		n-1 \\ 2
	\end{pmatrix} - \begin{pmatrix}
		n-2 \\ 2
	\end{pmatrix} = n-2$. We conclude:
	\allowdisplaybreaks
	\begin{align*}
		c_i - d_i = \vert C_i \backslash D_i \vert &= \sum_{w\in C_{i-m}} \vert f^{-1}(w) \vert \\
		&= \sum_{w\in C_{i-m} \backslash D_{i-m}} \vert f^{-1}(w) \vert + \sum_{w\in D_{i-m}} \vert f^{-1}(w) \vert \\
		&= \begin{pmatrix}
			n-2 \\ 2
		\end{pmatrix} \left( c_{i-m} - d_{i-m} \right) + \begin{pmatrix}
			n-1 \\ 2
		\end{pmatrix} d_{i-m} \\
		&= \begin{pmatrix}
			n-2 \\ 2
		\end{pmatrix} c_{i-m} + (n-2) d_{i-m}. \tag*{\qedhere}
	\end{align*}
\end{proof}

\begin{lemma}\label{Lemma: Mi}
	$2c_{i+1} - d_{i+1} = (n-2) c_i + d_i$.
\end{lemma}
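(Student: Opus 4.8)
The plan is to read both sides of the identity as cardinalities of sets of ``descent pairs'' and ``ascent pairs'' and to match them by an elementary length-preserving bijection. First I would interpret the left-hand side combinatorially. Since $i+1 > m \geq 3 > 0$, every $w\in C_{i+1}$ is nontrivial, so by Remark \ref{Remark: unique} the number $\delta(w)$ of generators $s\in S$ with $\ell(ws) = \ell(w)-1$ equals $1$ if $w\in D_{i+1}$ and $2$ if $w\in C_{i+1}\backslash D_{i+1}$. Counting the pairs $(w,s)$ with $w\in C_{i+1}$, $s\in S$ and $\ell(ws)=\ell(w)-1$ by summing $\delta(w)$ over $w$ therefore gives $d_{i+1} + 2(c_{i+1}-d_{i+1}) = 2c_{i+1}-d_{i+1}$, which is exactly the left-hand side.

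Next I would recount the same quantity from the side of $C_i$. Set $A := \{(w,s)\mid w\in C_{i+1},\ \ell(ws)=i\}$ and $B := \{(v,s)\mid v\in C_i,\ \ell(vs)=i+1\}$. The assignment $(w,s)\mapsto (ws,s)$ maps $A$ into $B$, because $\ell(ws)=i$ forces $ws\in C_i$ and $\ell((ws)s)=\ell(w)=i+1$; and $(v,s)\mapsto(vs,s)$ is a two-sided inverse, so $|A|=|B|$. Hence $2c_{i+1}-d_{i+1}=|A|=|B|=\sum_{v\in C_i} a(v)$, where $a(v)$ denotes the number of $s\in S$ with $\ell(vs)=\ell(v)+1$, i.e.\ the number of length-increasing generators of $v$.

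Finally I would evaluate $\sum_{v\in C_i} a(v)$ using the descent structure. For every $v\in C_i$ and $s\in S$ we have $\ell(vs)\in\{\ell(v)-1,\ \ell(v)+1\}$, so $a(v) = n-\delta(v)$. As before, each $v\in C_i$ is nontrivial, so Remark \ref{Remark: unique} gives $\delta(v)=1$ exactly when $v\in D_i$ and $\delta(v)=2$ exactly when $v\in C_i\backslash D_i$. Splitting the sum accordingly yields
\begin{align*}
	2c_{i+1}-d_{i+1} &= \sum_{v\in D_i}(n-1) + \sum_{v\in C_i\backslash D_i}(n-2) \\
	&= (n-1)d_i + (n-2)(c_i-d_i) \\
	&= (n-2)c_i + d_i,
\end{align*}
as claimed. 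The argument is purely combinatorial, and I expect no serious obstacle: the only point demanding care is the repeated appeal to Remark \ref{Remark: unique}, which is what guarantees $\delta(v)\in\{1,2\}$ for all elements of length $i$ and $i+1$, and which is exactly where the completeness of the Coxeter diagram (forcing $\vert\langle J\rangle\vert=\infty$ for every three-element $J\subseteq S$) enters.
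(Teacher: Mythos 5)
Your proof is correct and is essentially the paper's own argument: the paper also double-counts the set of ascent pairs between $C_i$ and $C_{i+1}$ (its $M_i$ is your $B$), computing it once via the descent structure on $C_{i+1}$ (fibers of size $1$ or $2$ of the map $(w,s)\mapsto ws$, which is your set $A$ and bijection) and once by counting length-increasing generators on $C_i$, with Remark \ref{Remark: unique} justifying that every nontrivial element has exactly one or two descents in both counts.
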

\begin{proof}
	We put $M_i := \{ (w, s) \in C_i \times S \mid ws \in C_{i+1} \}$. We prove the claim by showing that both sides of the equation are equal to $\vert M_i \vert$.
	\begin{enumerate}[label=(\alph*)]
		\item $2c_{i+1} - d_{i+1} = \vert M_i \vert$: We consider the mapping
		\[ \pi: M_i \to C_{i+1}, (w, s) \mapsto ws. \]
		Clearly, $\pi$ is surjective. We define
		\allowdisplaybreaks
		\begin{align*}
			&C_{i+1}^1 := \{ w\in C_{i+1} \mid \vert \pi^{-1}(w) \vert = 1 \} &&\text{and} &&C_{i+1}^{>1} := \{ w\in C_{i+1} \mid \vert \pi^{-1}(w) \vert >1 \}.
		\end{align*}
		We show that $C_{i+1}^{>1} = C_{i+1} \backslash D_{i+1}$. Let $\bar{w} \in C_{i+1}^{>1}$ be an element. Then there exist $(w, s) \neq (w', s') \in \pi^{-1}(\bar{w})$. It follows $s\neq s'$ and hence $\bar{w} \in C_{i+1} \backslash D_{i+1}$. Now let $w\in C_{i+1} \backslash D_{i+1}$. Then there exist unique $s_w \neq t_w \in S$ which decrease the length of $w$. This implies $(ws_w, s_w) \neq (wt_w, t_w) \in \pi^{-1}(w)$. As $\vert \langle J \rangle \vert = \infty$ for all $J \subseteq S$ containing three elements, we deduce for every $1 \neq w\in W$ that
		\[ \vert \pi^{-1}(w) \vert \in \{ 1, 2 \}. \]
		We infer $C_{i+1}^1 = C_{i+1} \backslash C_{i+1}^{>1} = C_{i+1} \backslash \left( C_{i+1} \backslash D_{i+1} \right) = D_{i+1}$ and compute:
		\allowdisplaybreaks
		\begin{align*}
			\vert M_i \vert = \sum_{w\in C_{i+1}} \vert \pi^{-1}(w) \vert &= \sum_{w\in D_{i+1}} \vert \pi^{-1}(w) \vert + \sum_{w\in C_{i+1} \backslash D_{i+1}} \vert \pi^{-1}(w) \vert \\
			&= d_{i+1} + 2(c_{i+1} - d_{i+1}) \\
			&= 2c_{i+1} - d_{i+1}.
		\end{align*}
		
		\item $(n-2) c_i + d_i = \vert M_i \vert$: For a subset $T \subseteq C_i$, we define
		\allowdisplaybreaks
		\begin{align*}
			M_{i, T} &:= \{ (w, s) \in M_i \mid w \in T \}.
		\end{align*}
		For $w\in D_i$ there are exactly $n-1$ elements which increase the length of $w$. Thus we have $\vert M_{i, D_i} \vert = (n-1)d_i$. For $w\in C_i \backslash D_i$ there are exactly $n-2$ elements in $S$ which increase the length of $w$. Thus we have $\vert M_{i, C_i \backslash D_i} \vert = (n-2)(c_i - d_i)$. We conclude:
		\[ \vert M_i \vert = \vert M_{i, C_i \backslash D_i} \vert + \vert M_{i, D_i} \vert = (n-2) (c_i - d_i) + (n-1)d_i = (n-2) c_i + d_i. \tag*{\qedhere} \]
	\end{enumerate}
\end{proof}

\begin{lemma}\label{Lemma: inequalities}
	$c_{i+1} \leq (n-1) c_i - (n-2)d_{i-m+1} \leq (n-1) c_i$.
\end{lemma}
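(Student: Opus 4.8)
The plan is to eliminate $d_{i+1}$ between Lemma~\ref{Lemma: Mi} and a \emph{shifted} instance of Lemma~\ref{Lemma: ci-di = i-m}, produce a closed formula for $c_{i+1}$, and then read off both inequalities. Note first that the right-hand inequality $(n-1)c_i-(n-2)d_{i-m+1}\leq (n-1)c_i$ is trivial, since $n\geq 3$ and $d_{i-m+1}\geq 0$ make $(n-2)d_{i-m+1}$ nonnegative; so the real content is the left-hand bound.

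First I would apply Lemma~\ref{Lemma: ci-di = i-m} not at index $i$ but at index $i+1$; this is legitimate because the convention $i>m$ forces $i+1>m$ and $i-m+1\geq 2>0$. It gives $c_{i+1}-d_{i+1}=\binom{n-2}{2}c_{i-m+1}+(n-2)d_{i-m+1}$, hence $d_{i+1}=c_{i+1}-\binom{n-2}{2}c_{i-m+1}-(n-2)d_{i-m+1}$. Substituting this into the identity $2c_{i+1}-d_{i+1}=(n-2)c_i+d_i$ of Lemma~\ref{Lemma: Mi}, the left-hand side collapses to $c_{i+1}+\binom{n-2}{2}c_{i-m+1}+(n-2)d_{i-m+1}$, and rearranging yields the exact expression
\[ c_{i+1}=(n-2)c_i+d_i-\binom{n-2}{2}c_{i-m+1}-(n-2)d_{i-m+1}. \]

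From here the first inequality is pure bookkeeping. Subtracting the claimed bound, the $(n-2)d_{i-m+1}$ terms cancel and leave $c_{i+1}-\bigl[(n-1)c_i-(n-2)d_{i-m+1}\bigr]=(d_i-c_i)-\binom{n-2}{2}c_{i-m+1}$. Since $D_i\subseteq C_i$ gives $d_i\leq c_i$, and $\binom{n-2}{2}c_{i-m+1}\geq 0$, this difference is $\leq 0$, which is exactly $c_{i+1}\leq (n-1)c_i-(n-2)d_{i-m+1}$.

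The only genuine decision — hence the step I would flag as the main obstacle — is recognizing that Lemma~\ref{Lemma: ci-di = i-m} must be invoked at the shifted index $i+1$: this is precisely what introduces the term $d_{i-m+1}$ appearing in the claim. Naively bounding $d_{i+1}\leq c_{i+1}$ directly in Lemma~\ref{Lemma: Mi} only recovers the weaker estimate $c_{i+1}\leq (n-1)c_i$ (the right-hand inequality), losing the refinement carried by $d_{i-m+1}$. Everything after the substitution is elementary nonnegativity, so no further input is needed.
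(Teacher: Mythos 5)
Your proof is correct and follows essentially the same route as the paper: both arguments combine Lemma~\ref{Lemma: Mi} with Lemma~\ref{Lemma: ci-di = i-m} applied at the shifted index $i+1$, discard the nonnegative term $\binom{n-2}{2}c_{i-m+1}$, and use $d_i \leq c_i$. The only cosmetic difference is that you solve for $c_{i+1}$ as an exact identity before bounding, whereas the paper writes the same content as a single chain of inequalities.
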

\begin{proof}
	The last inequality is obvious. Using Lemma \ref{Lemma: ci-di = i-m} and \ref{Lemma: Mi}, we deduce the following:
	\allowdisplaybreaks
	\begin{align*}
		c_{i+1} + (n-2) d_{i-m+1} &\leq 2c_{i+1} - d_{i+1} = (n-2) c_i + d_i \leq (n-1) c_i. \tag*{\qedhere}
	\end{align*}
\end{proof}

\begin{lemma}\label{Lemma: lower bound}
	Suppose $m>3$. Then the following hold:
	\begin{enumerate}[label=(\alph*)]
		\item $(n-2) c_i \leq c_{i+1}$;
		
		\item $(n-2) d_i \leq d_{i+1}$;
	\end{enumerate}
\end{lemma}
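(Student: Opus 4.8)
The plan is to prove each inequality by a counting argument that pushes length-$i$ chambers to length-$(i+1)$ chambers via right multiplication by a generator, the whole difficulty being to control when two such products coincide. Throughout I use that, by Remark \ref{Remark: unique}, every nontrivial chamber has either one or exactly two generators decreasing its length (so the set of ``down-directions'' has size $1$ or $2$), together with Lemma \ref{Lemma: not both down} (available since $m>3$ forces $m_{st}\geq 4$) and Corollary \ref{Corollary: second up}.

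For part (b), I would fix $w\in D_i$ with unique down-direction $s$, so the $n-1$ elements $wr$, $r\in S\setminus\{s\}$, all have length $i+1$ and each has $r$ as a down-direction. First I would show that if $wr\notin D_{i+1}$ then its second down-direction is forced to be $s$: writing $a$ for that second down-direction, if $a$ were a down-direction of $w$ then $a=s$ by uniqueness, while if $a$ were an up-direction of $w$ then $w$ would be the bottom chamber of the rank-$2$ residue $R_{\{r,a\}}(w)$, forcing $\ell(wra)=\ell(w)+2$ and contradicting $\ell(wra)=\ell(wr)-1$. Hence $wr\notin D_{i+1}$ implies $\ell(wrs)=\ell(w)$. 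Applying Lemma \ref{Lemma: not both down} to $w$ with a pair of up-directions $\{r,r'\}$ and the third generator $s$ shows that at most one $r\in S\setminus\{s\}$ satisfies $\ell(wrs)=\ell(w)$, so at least $n-2$ of the products $wr$ lie in $D_{i+1}$. Finally, the assignment $(w,r)\mapsto wr$ is injective on pairs with $wr\in D_{i+1}$: such an image has a unique down-direction, namely $r$, from which $w=(wr)r$ is recovered. Summing over $w\in D_i$ yields $d_{i+1}\geq (n-2)d_i$.

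For part (a), by Lemma \ref{Lemma: Mi} one has $(n-2)c_i = 2c_{i+1}-d_{i+1}-d_i = c_{i+1}+(c_{i+1}-d_{i+1}-d_i)$, so it suffices to prove $c_{i+1}-d_{i+1}\leq d_i$, i.e.\ $\vert C_{i+1}\setminus D_{i+1}\vert\leq\vert D_i\vert$. I would introduce the relation on $(C_{i+1}\setminus D_{i+1})\times D_i$ given by $v\sim y$ whenever $y$ is a down-neighbor of $v$, that is $y=vu$ for a down-direction $u$ of $v$. Each $v\in C_{i+1}\setminus D_{i+1}$ has exactly two down-directions $s,t$, and applying Corollary \ref{Corollary: second up} to the bottom chamber $p=\proj_{R_{\{s,t\}}(v)}1_W$ with the element $r_{\{s,t\}}s\in\langle s,t\rangle$ of length $m-1\geq 2$ gives $\ell(vsr)=\ell(vs)+1$ for every $r\in S\setminus\{s,t\}$; since $s$ is also an up-direction of $vs$, the only down-direction of $vs$ is $t$, so $vs\in D_i$, and symmetrically $vt\in D_i$. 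Thus each $v$ is related to exactly two elements of $D_i$.

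Conversely I would bound the fibers: given $y\in D_i$ with down-direction $t_y$, any $v\sim y$ must have the form $v=ys'$ for an up-direction $s'$ of $y$ whose partner down-direction is $t_y$, i.e.\ $\ell(ys't_y)=\ell(y)$. Exactly as in part (b), Lemma \ref{Lemma: not both down} (pair $\{s',s''\}$ of up-directions, third generator $t_y$) shows that at most one $s'$ qualifies, so each $y\in D_i$ is related to at most one $v$. Double counting the relation then gives $2\vert C_{i+1}\setminus D_{i+1}\vert\leq\vert D_i\vert$, which is more than enough. The crux of both parts — and the only place where $2$-sphericity, the complete diagram, and the hypothesis $m>3$ all enter — is the rigidity forcing a newly created second down-direction to coincide with an old down-direction, which is precisely what Lemma \ref{Lemma: not both down} and Corollary \ref{Corollary: second up} supply.
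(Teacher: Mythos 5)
Your proof is correct, and it splits into a half that mirrors the paper and a half that takes a different route. Part (b) is essentially the paper's own argument: the same forcing step (a second down-direction of $wr$ must equal $s_w$, by the gate property of rank-$2$ residues), the same application of Lemma \ref{Lemma: not both down} showing that at most one of the $n-1$ up-moves from $w\in D_i$ can fail to land in $D_{i+1}$, and the same injectivity observation. For part (a) the paper stays inside this single counting scheme: it shows, again via Corollary \ref{Corollary: second up}, that for $w\in C_i\setminus D_i$ \emph{every} one of its $n-2$ up-moves lands in $D_{i+1}$, and then adds the two counts to get $c_{i+1}\geq d_{i+1}\geq (n-2)d_i+(n-2)(c_i-d_i)=(n-2)c_i$, so it in fact proves the stronger inequality $d_{i+1}\geq (n-2)c_i$. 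You instead invoke the identity of Lemma \ref{Lemma: Mi} to reduce (a) to $c_{i+1}-d_{i+1}\leq d_i$, and establish the stronger bound $2(c_{i+1}-d_{i+1})\leq d_i$ by double counting: each $v\in C_{i+1}\setminus D_{i+1}$ has both of its down-neighbours in $D_i$, while each $y\in D_i$ admits at most one such $v$. Note that your first claim is precisely the contrapositive of the paper's claim about up-moves from $C_i\setminus D_i$ (both rest on Corollary \ref{Corollary: second up}), and your fiber bound reuses the part-(b) mechanism, so the rigidity inputs are identical and the difference is purely in the bookkeeping. Your route buys the extra inequality $2(c_{i+1}-d_{i+1})\leq d_i$, which is in the spirit of Lemma \ref{Lemma: ci leq di+di+1} used later for divergence, at the cost of importing Lemma \ref{Lemma: Mi}; the paper's route is self-contained within one count and delivers $d_{i+1}\geq (n-2)c_i$ for free.
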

\begin{proof}
	We define $N_i := \{ (w, s) \in C_i \times S \mid ws \in D_{i+1} \}$. Then $N_i \to D_{i+1}, (w, s) \mapsto ws$ is a bijection and hence $\vert N_i \vert = d_{i+1}$. As in the proof of Lemma \ref{Lemma: Mi} we define for a subset $T \subseteq C_i$:
	\[ N_{i, T} := \{ (w, s) \in N_i \mid w\in T \}. \]
	We see that $c_{i+1} \geq d_{i+1} = \vert N_i \vert = \vert N_{i, D_i} \vert + \vert N_{i, C_i \backslash D_i} \vert$. Let $w\in C_i$. We now count pairs $(w, s) \in N_i$. We distinguish the following two cases:
	\begin{enumerate}[label=(\roman*)]
		\item $w\in D_i$: Let $s_w \in S$ be the unique element with $\ell(ws_w) < \ell(w)$. Let $t\in S \backslash \{ s_w \}$. Then $wt \in C_{i+1}$. Suppose $wt \notin D_{i+1}$. Then there exists $t\neq r \in S$ with $\ell(wtr) < \ell(wt)$. This implies $\ell(wr) < \ell(w)$ and the uniqueness of $s_w$ yields $r = s_w$. Now let $r \in S \backslash \{s_w, t\}$. Then $wr \in C_{i+1}$. Again, if $wr \notin D_{i+1}$, then $s_w$ would decrease the length of $wr$. But this is a contradiction to Lemma \ref{Lemma: not both down}. This implies $(w, r) \in N_{i, D_i}$ for all $r\in S \backslash \{s_w, t\}$. This shows $(b)$.
		
		\item $w\in C_i \backslash D_i$: Let $s_w \neq t_w \in S$ be the two elements with $\ell(w s_w) = \ell(w t_w) < \ell(w)$. Now let $r\in S \backslash \{ s_w, t_w \}$. Then $wr \in C_{i+1}$. We assume by contrary $wr \notin D_{i+1}$. Then there would exist $u \in S \backslash \{ r \}$ with $\ell(wr u) = \ell(w)$ and hence $\ell(wu) < \ell(w)$. By the uniqueness of $s_w$ and $t_w$ we obtain $u \in \{ s_w, t_w \}$. But then we obtain a contradiction to Corollary \ref{Corollary: second up}. We conclude $(w, r) \in N_{i, C_i \backslash D_i}$.
	\end{enumerate}
	We infer $c_{i+1} \geq \vert N_{i, D_i} \vert + \vert N_{i, C_i \backslash D_i} \vert \geq (n-2) d_i + (n-2) (c_i - d_i) = (n-2) c_i$.
\end{proof}

\section{Main results}

\subsection*{Reduction step}

Let $(W, S)$ and $(W', S')$ be two Coxeter systems. Following \cite{Te16}, we define $(W, S) \preceq (W', S')$ if there exists an injective map $\phi: S \to S'$ satisfying $m_{st} \leq m'_{\phi(s)\phi(t)}$ for all $s, t\in S$.

\begin{theorem}\label{Theorem: Theorem A in Te16}
	Let $(W, S)$ and $(W', S')$ be two Coxeter systems and let $a_n := a_n^{(W, S)}$ and $a_n' := a_n^{(W', S')}$. If $(W, S) \preceq (W', S')$, then $a_n \leq a_n'$.
\end{theorem}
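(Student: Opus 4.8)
The plan is to produce, whenever $(W,S) \preceq (W',S')$, a \emph{length-preserving injection} $W \hookrightarrow W'$; since $a_n^{(W,S)}$ is the number of elements of length $n$, such a map immediately gives $a_n \leq a_n'$ for all $n$. First I would reduce to the case where $\phi$ is a bijection. Given the injection $\phi \colon S \to S'$ with $m_{st} \leq m'_{\phi(s)\phi(t)}$, set $J' := \phi(S)$ and pass to the standard parabolic subgroup $\langle J' \rangle$ of $W'$. Standard parabolic subgroups are isometrically embedded, i.e.\ $\ell_{(W',S')}$ restricted to $\langle J'\rangle$ agrees with $\ell_{(\langle J'\rangle, J')}$ (cf.\ \cite{AB08}), so the length-$n$ sphere of $(\langle J'\rangle, J')$ sits inside that of $(W',S')$ and hence $a_n^{(\langle J'\rangle, J')} \leq a_n^{(W',S')}$. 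As $\phi$ is now a bijection $S \to J'$ with $m_{st} \leq m'_{\phi(s)\phi(t)}$, it remains to treat the case $S = S'$, $\phi = \id$, and $m_{st} \leq m'_{st}$ for all $s \neq t$.

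The key step, and the main obstacle, is a monotonicity of reducedness: if $m_{st} \leq m'_{st}$ for all $s \neq t$, then every word over $S$ that is reduced in $(W,S)$ is also reduced in $(W',S')$. I would prove this via Tits' solution to the word problem: a word fails to be reduced precisely when some sequence of braid moves $\underbrace{aba\cdots}_{m_{ab}} \leftrightarrow \underbrace{bab\cdots}_{m_{ab}}$ carries it to a word with two equal consecutive letters (cf.\ \cite{AB08}). Suppose a $W$-reduced word $\mathbf{w}$ were not $W'$-reduced, and fix a $W'$-braid sequence $\mathbf{w} = \mathbf{w}_0, \ldots, \mathbf{w}_k$ with a repeated letter in $\mathbf{w}_k$. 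Reading along it, consider the first move using a pair $(a,b)$ with $m'_{ab} > m_{ab}$. If there is no such move, then all moves are relations of $W$ as well, so $\mathbf{w}$ and $\mathbf{w}_k$ represent the same element of $W$, contradicting that $\mathbf{w}_k$ contains a square while $\mathbf{w}$ is $W$-reduced. Otherwise, every word before that move is joined to $\mathbf{w}$ by $W$-braid moves and is therefore $W$-reduced; but the offending move replaces an alternating factor of length $m'_{ab} > m_{ab}$ (so here $m_{ab} < \infty$), and in the finite dihedral group $\langle a,b\rangle \leq W$ such a factor has length strictly below $m'_{ab}$, contradicting that a factor of a $W$-reduced word is $W$-reduced. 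Either way we reach a contradiction, proving the claim.

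With reducedness-monotonicity in hand, I would fix a section $w \mapsto \mathbf{s}(w)$ assigning to each $w \in W$ a reduced word spelling it (e.g.\ its ShortLex normal form), so that distinct elements receive distinct words, and define $\Psi \colon W \to W'$ by sending $w$ to the element of $W'$ spelled by $\mathbf{s}(w)$. The claim makes $\mathbf{s}(w)$ reduced in $W'$, whence $\ell_{(W',S')}(\Psi(w)) = \lvert \mathbf{s}(w) \rvert = \ell_{(W,S)}(w)$, so $\Psi$ preserves length. For injectivity, if $\Psi(w_1) = \Psi(w_2)$ then the $W$-reduced words $\mathbf{s}(w_1) \neq \mathbf{s}(w_2)$ represent the same element of $W'$, so by Matsumoto's theorem they are linked by $W'$-braid moves; the same first-deviating-move analysis shows each such move is forced to be a $W$-braid move (else a $W$-reduced word would carry an over-long alternating factor), so $\mathbf{s}(w_1)$ and $\mathbf{s}(w_2)$ already represent the same element of $W$, i.e.\ $w_1 = w_2$. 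Thus $\Psi$ injects the length-$n$ sphere of $(W,S)$ into that of $(W',S')$, giving $a_n \leq a_n'$. I expect the reducedness-monotonicity lemma to be the crux: once it is established, both the length preservation and the injectivity of $\Psi$ fall out of the same braid-move bookkeeping.
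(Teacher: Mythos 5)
Your proof is correct, but it differs from the paper's in a basic way: the paper does not prove this statement at all --- its entire proof is a citation to Theorem A of Terragni's paper \cite{Te16}. Your argument is a complete, self-contained proof, and it holds up. The crux, your reducedness-monotonicity lemma (if $m_{st} \le m'_{st}$ for all $s \neq t$ on the same generating set, then every $(W,S)$-reduced word is $(W',S')$-reduced), is established correctly via Tits' solution of the word problem: in a chain of $W'$-braid moves starting from a $W$-reduced word, every word before the first move involving a pair with $m'_{ab} > m_{ab}$ is connected to the original word by $W$-braid moves and is therefore still $W$-reduced, yet that offending move requires an alternating $(a,b)$-factor of length $m'_{ab} > m_{ab}$, which cannot occur in a $W$-reduced word since factors of reduced words are reduced and the standard dihedral parabolic $\langle a,b \rangle$ (isometrically embedded, $m_{ab} < \infty$) admits no reduced word of length exceeding $m_{ab}$; and if no such move occurs, all moves are $W$-braid moves, so a terminal square $ss$ would contradict $W$-reducedness of the original word. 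The same first-deviating-move bookkeeping correctly yields injectivity of your length-preserving map $\Psi$ via Matsumoto's theorem, and the preliminary reduction to bijective $\phi$ through the isometrically embedded parabolic $\langle \phi(S) \rangle \le W'$ is standard. As for what each approach buys: the paper's citation keeps the exposition minimal and defers all combinatorics to \cite{Te16}, whereas your argument makes the monotonicity theorem self-contained using only the word property of Coxeter groups, which fits naturally into a paper whose other proofs are already Coxeter-combinatorial; the price is roughly a page of braid-move analysis that the paper chose not to reproduce.
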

\begin{proof}
	This is \cite[Theorem A]{Te16}.
\end{proof}

\subsection*{Convergence}

\begin{lemma}\label{Lemma: Existence of k}
	Let $(W, S)$ be of rank $n \geq 3$ and assume that there exists $m \geq 4$ such that $m_{st} = m$ holds for all $s\neq t \in S$. Then there exists $k \in \RR$ such that $\frac{d_i}{c_i} \geq k >0$ holds for all $i>m$.
\end{lemma}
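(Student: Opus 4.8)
The plan is to prove the uniform inequality $d_i \ge \tfrac{1}{2} c_i$, so that $k = \tfrac12$ works for all large $i$; since $D_i \subseteq C_i$ already gives $d_i/c_i \le 1$, this confines the ratio to $[\tfrac12,1]$. The whole argument is a short manipulation of the three identities just established, the point being to use Lemma \ref{Lemma: Mi} to eliminate the $c$-terms and then trap $c_i$ between two consecutive $d$'s. The only genuine work is the bookkeeping at the smallest admissible index.

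First I would rewrite Lemma \ref{Lemma: Mi} with $i$ replaced by $i-1$, which reads $2c_i - d_i = (n-2)c_{i-1} + d_{i-1}$, i.e.\ $d_i = 2c_i - (n-2)c_{i-1} - d_{i-1}$. Feeding in Lemma \ref{Lemma: lower bound}$(a)$ at index $i-1$, namely $(n-2)c_{i-1} \le c_i$, this becomes $d_i \ge c_i - d_{i-1}$, equivalently $c_i \le d_i + d_{i-1}$. Next I would invoke Lemma \ref{Lemma: lower bound}$(b)$ at index $i-1$: from $(n-2)d_{i-1} \le d_i$ together with $n \ge 3$ (so $n-2 \ge 1$ and hence $d_{i-1} \le (n-2)d_{i-1}$) one obtains $d_{i-1} \le d_i$. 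Substituting gives $c_i \le d_i + d_{i-1} \le 2d_i$, that is $d_i/c_i \ge \tfrac12$. Note that this uses $m \ge 4$ only through the hypothesis of Lemma \ref{Lemma: lower bound}, which is exactly the standing assumption here.

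The one delicate point is the range of validity: both cited lemmas are applied at index $i-1$, so the chain above is licensed whenever $i-1 > m$, i.e.\ for $i \ge m+2$, leaving the single value $i = m+1$. For that value it suffices to check $d_{m+1} > 0$. Lemma \ref{Lemma: ci-di = i-m} at $i = m+1$ gives $c_{m+1} - d_{m+1} = \binom{n-2}{2}c_1 + (n-2)d_1$; since $c_1 = d_1 = n$ and $\binom{n-2}{2} + (n-2) = \binom{n-1}{2}$, this is $c_{m+1} - d_{m+1} = n\binom{n-1}{2}$, so $d_{m+1} > 0$ amounts to $c_{m+1} > n\binom{n-1}{2}$, which holds because no braid relation applies to reduced words of length below $m$, forcing $c_{m+1}$ to be comparable to $n(n-1)^m$ and hence far larger than $n\binom{n-1}{2}$. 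Putting $k := \min\{\tfrac12,\, d_{m+1}/c_{m+1}\} > 0$ then yields $d_i/c_i \ge k$ for all $i > m$.

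I expect this boundary index to be the only place where attention is needed, and the cleanest way to dispose of it is simply to observe that the proofs of Lemma \ref{Lemma: Mi} and Lemma \ref{Lemma: lower bound} use nothing about $i$ beyond $w \neq 1_W$ (i.e.\ $i \ge 1$): the descent/ascent counts and the applications of Corollary \ref{Corollary: second up} and Lemma \ref{Lemma: not both down} are purely structural. Consequently both statements are in fact valid at index $m$, the chain of inequalities applies at $i-1 = m$ as well, and one concludes $d_i \ge \tfrac12 c_i$ for \emph{every} $i > m$ with no exceptional case, giving $k = \tfrac12$ outright.
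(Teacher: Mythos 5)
Your proof is correct, but it takes a genuinely different route from the paper's. The paper's own proof combines Lemma \ref{Lemma: ci-di = i-m} at index $i$ with Lemma \ref{Lemma: lower bound}$(a)$ and $(b)$ iterated $m$ times (to bound $c_{i-m}$ and $d_{i-m}$ from above by $\frac{c_i}{(n-2)^m}$ and $\frac{d_i}{(n-2)^m}$), arriving at the explicit constant $k = \left( 1 - \frac{1}{2 (n-2)^{m-2}} \right) \left( \frac{1}{(n-2)^{m-1}} + 1 \right)^{-1}$, which depends on $n$ and $m$; it tends to $1$ for large $n$ but equals $\tfrac{1}{4}$ when $n=3$. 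You instead bypass Lemma \ref{Lemma: ci-di = i-m} altogether: shifting Lemma \ref{Lemma: Mi} to index $i-1$ and using each part of Lemma \ref{Lemma: lower bound} exactly once gives $c_i \leq d_i + d_{i-1} \leq 2d_i$, i.e.\ the universal constant $k = \tfrac{1}{2}$ --- simpler, uniform in $n$ and $m$, and entirely sufficient for Theorem \ref{Theorem: convergence of growth function}, where any positive $k$ works. Your attention to the boundary index is also well placed, and in fact your closing observation is needed to fully justify the paper's proof as well: the paper applies Lemma \ref{Lemma: lower bound} at the indices $i-m, \ldots, i-1$, which for $m < i \leq 2m$ lie below the standing convention $i > m$ of Section 3; the clean repair is precisely your remark that the proofs of Lemma \ref{Lemma: Mi} and Lemma \ref{Lemma: lower bound} rely only on structural facts (Remark \ref{Remark: unique}, Lemma \ref{Lemma: not both down}, Corollary \ref{Corollary: second up}) valid for every $w \neq 1_W$, hence hold for all $i \geq 1$, unlike Lemma \ref{Lemma: ci-di = i-m}, which genuinely requires $i > m$. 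The only loose step in your write-up is the first, later discarded, patch for $i = m+1$, where ``$c_{m+1}$ comparable to $n(n-1)^m$'' is asserted rather than proved; since your final argument supersedes it, nothing rests on that claim.
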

\begin{proof}
	Using Lemma \ref{Lemma: ci-di = i-m}, \ref{Lemma: lower bound} and \ref{Lemma: lower bound}$(b)$, we compute:
	\allowdisplaybreaks
	\begin{align*}
		1 = \frac{c_i -d_i +d_i}{c_i} &= \frac{1}{c_i}\left( \begin{pmatrix}
			n-2 \\ 2
		\end{pmatrix}c_{i-m} + (n-2)d_{i-m} + d_i \right) \\
		&\leq \frac{1}{c_i}\left( \begin{pmatrix}
			n-2 \\ 2
		\end{pmatrix} \frac{1}{(n-2)^m} c_i + \left( \frac{1}{(n-2)^{m-1}} +1 \right) d_i \right) \\
		&= \frac{1}{c_i}\left( \frac{(n-3)}{2(n-2)^{m-1}} c_i + \left( \frac{1}{(n-2)^{m-1}} +1 \right) d_i \right) \\
		&\leq \frac{1}{2 (n-2)^{m-2}} + \left( \frac{1}{(n-2)^{m-1}} +1 \right) \frac{d_i}{c_i}
	\end{align*}
	We put
	\[ k := \left( 1 - \frac{1}{2 (n-2)^{m-2}} \right) \cdot \left( \frac{1}{(n-2)^{m-1}} + 1 \right)^{-1}. \]
	As $n \geq 3$ and $m \geq 4$, we have $k >0$. This proves the claim.
\end{proof}

\begin{theorem}\label{Theorem: convergence of growth function}
	Let $(W, S)$ be $2$-spherical and of rank $n \geq 3$. Then $p_{(W, S)}\left(\frac{1}{n-1}\right) < \infty$.
\end{theorem}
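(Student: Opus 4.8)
The plan is to reduce the general $2$-spherical case to the uniform, complete-diagram situation for which the inequalities of the previous section were proved, and then to upgrade those inequalities into a genuine geometric decay of the normalised coefficients $c_i(n-1)^{-i}$.

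First I would carry out the reduction. Since $(W,S)$ is $2$-spherical of finite rank $n$, the finitely many numbers $m_{st}$ (for $s\neq t$) are all finite; set $m:=\max\{4,\max_{s\neq t} m_{st}\}$ and let $(W',S')$ be the Coxeter system of rank $n$ with $m'_{s't'}=m$ for all $s'\neq t'\in S'$. Any bijection $\phi\colon S\to S'$ then witnesses $(W,S)\preceq(W',S')$, because $m_{st}\le m=m'_{\phi(s)\phi(t)}$. By Theorem~\ref{Theorem: Theorem A in Te16} (whose $a_i$ are our $c_i$) we get $c_i^{(W,S)}\le c_i^{(W',S')}$ for every $i$, whence
\[ p_{(W,S)}\!\left(\tfrac{1}{n-1}\right)=\sum_{i\ge 0} c_i^{(W,S)}(n-1)^{-i}\le \sum_{i\ge 0} c_i^{(W',S')}(n-1)^{-i}=p_{(W',S')}\!\left(\tfrac{1}{n-1}\right). \]
It therefore suffices to prove finiteness for $(W',S')$, which has complete diagram with common label $m\ge 4$ and so satisfies the standing hypotheses of Lemmas~\ref{Lemma: inequalities} and \ref{Lemma: Existence of k}. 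From now on I work with this system and drop the primes.

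Next I would exploit the two available inequalities. Write $b_i:=c_i(n-1)^{-i}$. Lemma~\ref{Lemma: inequalities} gives $c_{i+1}\le(n-1)c_i$ for $i>m$, so $(b_i)_{i>m}$ is non-increasing; the same lemma furnishes the sharper bound $c_{i+1}\le(n-1)c_i-(n-2)d_{i-m+1}$. Dividing the sharper bound by $(n-1)^{i+1}$ and inserting Lemma~\ref{Lemma: Existence of k} in the form $d_{i-m+1}\ge k\,c_{i-m+1}$ (valid once $i-m+1>m$, i.e.\ $i\ge 2m$) turns it into
\[ b_{i+1}\le b_i-(n-2)(n-1)^{-m}k\,b_{i-m+1}\qquad(i\ge 2m). \]
Since $(b_i)$ is non-increasing on this range we have $b_{i-m+1}\ge b_i$, so the displayed line upgrades to $b_{i+1}\le(1-c)\,b_i$ with $c:=(n-2)(n-1)^{-m}k$.

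Finally I would check $0<c<1$: positivity is immediate from $k>0$, while $c<1$ follows because $(n-2)(n-1)^{-m}<1$ (indeed $n-2<n-1\le(n-1)^m$) and $k<1$, both valid for $n\ge 3$, $m\ge 4$. Hence $b_{i+1}\le(1-c)b_i$ for all $i\ge 2m$, so the tail $\sum_{i\ge 2m}b_i$ is dominated by a convergent geometric series; adding the finitely many initial terms shows $p_{(W',S')}(1/(n-1))=\sum_i b_i<\infty$, and with the reduction this proves the theorem. The main obstacle is the middle step: the free inequality $c_{i+1}\le(n-1)c_i$ only makes $b_i$ non-increasing, which is far too weak for summability, so the whole argument hinges on promoting this to strict geometric decay. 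That promotion is exactly what the correction term $-(n-2)d_{i-m+1}$ supplies, but only because Lemma~\ref{Lemma: Existence of k} guarantees $d_i$ is a fixed positive proportion of $c_i$; securing that uniform lower bound — and hence the uniform, $m\ge 4$ diagram obtained after reduction — is the crux.
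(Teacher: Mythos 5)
Your proof is correct and is essentially the paper's own argument: the same reduction via Theorem \ref{Theorem: Theorem A in Te16} to the uniform system with label $m=\max\{4,m_{st}\}$, followed by the same quotient-criterion estimate combining Lemma \ref{Lemma: inequalities} with Lemma \ref{Lemma: Existence of k}, yielding the identical decay constant $1-\frac{(n-2)k}{(n-1)^m}$ for $i\ge 2m$. Your reformulation in terms of the normalised coefficients $b_i=c_i(n-1)^{-i}$ is only a cosmetic repackaging of the paper's ratio bound.
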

\begin{proof}
	Let $m := \max \{ 4, m_{st} \mid s, t \in S \}$ and let $(W', S')$ be the Coxeter system of rank $n$ with $m_{st}' = m$ for all $s\neq t \in S'$. Using Theorem \ref{Theorem: Theorem A in Te16} it suffices to show that
	\[ p_{(W', S')}\left( \frac{1}{n-1} \right) < \infty. \]
	By Lemma \ref{Lemma: Existence of k} there exists $k\in \RR$ such that $\frac{d_i}{c_i} \geq k >0$ holds for all $i >m$. We apply the quotient criterion. We use Lemma \ref{Lemma: inequalities} and compute for $i>2m-1$ and $t = \frac{1}{n-1}$:
	\allowdisplaybreaks
	\begin{align*}
		\frac{c_{i+1}t^{i+1}}{c_i t^i} \leq \frac{(n-1) c_i - (n-2)d_{i-m+1}}{(n-1) c_i} \leq 1 - \frac{(n-2) d_{i-m+1}}{(n-1)^m c_{i-m+1}} \leq 1- \frac{n-2}{(n-1)^m} k <1. \tag*{\qedhere}
	\end{align*}
\end{proof}

\subsection*{Divergence}

In this subsection we prove that the new lower bound $\frac{1}{n-1}$ for the finiteness of the growth function is optimal for the class of $2$-spherical Coxeter systems with complete Coxeter diagram.

\begin{lemma}\label{Lemma: ci leq di+di+1}
	Let $(W, S)$ be $2$-spherical and of rank $n \geq 4$ and assume that the underlying Coxeter diagram is the complete graph. Then $(n-2) c_i \leq d_i + d_{i+1}$.
\end{lemma}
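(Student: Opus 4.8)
The plan is to double count the set of \emph{ascending edges}
\[ M_i := \{ (w, r) \in C_i \times S \mid \ell(wr) = \ell(w) + 1 \}. \]
Since the Coxeter diagram is complete, any three generators generate an infinite group (a rank-$3$ Coxeter group with all labels $\geq 3$ is never finite), so exactly as in Remark \ref{Remark: unique} every $w \in W \backslash \{1_W\}$ has either one or two right descents. Counting $M_i$ by its first coordinate therefore yields $n-1$ ascents for each $w \in D_i$ and $n-2$ ascents for each $w \in C_i \backslash D_i$, so that
\[ \vert M_i \vert = (n-1) d_i + (n-2)(c_i - d_i) = (n-2) c_i + d_i. \]
Hence the asserted inequality is equivalent to $\vert M_i \vert \leq 2 d_i + d_{i+1}$, and the whole proof reduces to counting $M_i$ by its second coordinate with the right bookkeeping.

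Next I would split $M_i$ according to the image $u := wr \in C_{i+1}$. If $u \in D_{i+1}$, then $r$ is its unique descent, so $u$ has exactly one preimage, and these pairs contribute precisely $d_{i+1}$. It remains to bound the number of \emph{type-$2$} pairs, those with $u \in C_{i+1} \backslash D_{i+1}$, by $2 d_i$. For such a pair, $u$ has exactly two descents $\{r, t\}$, so $u$ is the longest element of the residue $R := R_{\{r,t\}}(u)$; writing $u = (\proj_R 1_W)\, r_{\{r,t\}}$ and $w = ur$, I would apply Corollary \ref{Corollary: second up} at the chamber $\proj_R 1_W$, whose two ascents are $r, t$, together with the length-$\geq 2$ element $r_{\{r,t\}}r \in \langle r, t \rangle$. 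This forces every generator outside $\{r, t\}$ to be an ascent of $w$, and combined with the within-residue analysis (where $r$ is an ascent and $t$ the only descent of $w$) it shows $w \in D_i$ with unique descent $t$. In particular, every type-$2$ pair starts at a chamber of $D_i$.

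The crux is then to bound, for a fixed $w \in D_i$ with unique descent $t$, the number of $r$ for which $(w,r)$ is of type $2$ by $2$. Here I would pass to the auxiliary chamber $g := wt$ of length $i-1$: using $u = wr = (\proj_R 1_W)\, r_{\{r,t\}}$, a short dihedral computation inside $\langle r, t \rangle$ gives $\ell(gr) = \ell(wtr) = \ell(\proj_R 1_W) + (m_{rt} - 3) = \ell(w) - 2 = \ell(g) - 1$, so each admissible $r$ is a right descent of $g$. Since $t$ is an ascent of $g$ (as $\ell(gt) = \ell(w) > \ell(g)$) and $g$ again has at most two descents, there are at most two such $r$; thus at most $2$ type-$2$ pairs begin at each $w \in D_i$. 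Summing, the type-$2$ pairs number at most $2 d_i$, whence $\vert M_i \vert \leq d_{i+1} + 2 d_i$ and the claim follows (for $i \geq 2$, so that $g \neq 1_W$; the remaining cases are immediate). The main obstacle is precisely this fiber bound: counting naively gives only $n-1$ type-$2$ neighbours per chamber of $D_i$, which is far too weak, and it is Corollary \ref{Corollary: second up} that rules out \emph{external} descents and makes the reduction to the two descents of $g = wt$ possible.
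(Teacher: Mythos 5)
Your proof is correct and is essentially the paper's own argument in complementary bookkeeping: the paper counts only the edges $N_i = \{(w,s) : ws \in D_{i+1}\}$ and lower-bounds $|N_i| = d_{i+1}$ by $(n-3)d_i + (n-2)(c_i - d_i)$, which is the same inequality as your $|M_i| \leq d_{i+1} + 2d_i$ after rearranging. Both key steps coincide as well: Corollary \ref{Corollary: second up} is used exactly as you use it to show that ascents from $C_i \backslash D_i$ land in $D_{i+1}$, and your "at most two type-$2$ ascents per $w \in D_i$, since they are descents of $g = wt$" is precisely the paper's contradiction argument that three such ascents would give $ws_w$ three descents, forcing a spherical rank-$3$ subset.
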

\begin{proof}
	For $i=0$ we have $c_0 = 1$, $d_0 = 0$ and $d_1 = n$ and the claim follows. Thus we can assume $i>0$. As in Lemma \ref{Lemma: lower bound} we define $N_i := \{ (w, s) \in C_i \times S \mid ws \in D_{i+1} \}$ as well as $N_{i, T} := \{ (w, s) \in N_i \mid w\in T \}$ for $T \subseteq C_i$. We consider the mapping 
	\[ \pi: N_i \to D_{i+1}, (w,s) \mapsto ws. \]
	As before, $\pi$ is a bijection and we have $\vert N_i \vert = d_{i+1}$. Moreover, we have $N_i = N_{i, D_i} \cup N_{i, C_i \backslash D_i}$ and this union is disjoint. We now count pairs $(w, s)$ in $N_i$.
	
	We fix $w\in D_i$ and we let $s_w \in S$ be the unique element with $\ell(w s_w) = \ell(w) -1$. Assume that there are $r, s, t \in S \backslash \{ s_w \}$ pairwise distinct with $wr, ws, wt \in C_{i+1} \backslash D_{i+1}$. Similarly as in Lemma \ref{Lemma: lower bound}$(b)$ we deduce $\ell(wz s_w) = \ell(w)$ for each $z\in \{r, s, t\}$. As $m_{pq} \geq 3$ holds for all $p \neq q \in S$, we infer $\ell(w s_w z) = \ell(w s_w) -1$. As $\{ r, s, t \}$ is not spherical, this is a contradiction and we have for a fixed $w\in D_i$ at least $n-3$ tuples $(w, s)$ in $N_i$. 
	
	We fix $w\in C_i \backslash D_i$ and we let $s_w \neq t_w \in S$ be the unique elements with $\ell(w s_w) = \ell(w)- 1 = \ell(w t_w)$. Assume that there is $s\in S \backslash \{ s_w, t_w \}$ with $ws \in C_{i+1} \backslash D_{i+1}$. Then $\ell(w) \in \{ \ell(wss_w), \ell(wst_w) \}$. W.l.o.g.\ we assume $\ell(wss_w) = \ell(w)$. But then Corollary \ref{Corollary: second up} implies $\ell(wt_w) = \ell(w) +1$, which is a contradiction. Thus we have for a fixed $w\in C_i \backslash D_i$ exactly $n-2$ tuples $(w, s)$ in $N_i$. This implies that $(n-2) c_i - d_i = (n-3) d_i + (n-2) (c_i - d_i) \leq d_{i+1}$.
\end{proof}

\begin{theorem}\label{Theorem: divergence of growth function}
	Let $(W, S)$ be of rank $n \geq 4$ and assume that the underlying Coxeter diagram is the complete graph. Then $p_{(W, S)}\left( \frac{1}{n-2} \right) = \infty$.
\end{theorem}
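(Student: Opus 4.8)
The plan is to reduce to the case of a constant edge label $m=3$ and then combine the exact identity of Lemma~\ref{Lemma: Mi} with the inequality of Lemma~\ref{Lemma: ci leq di+di+1} to obtain the single estimate $c_{i+1} \geq (n-2) c_i$. This forces the general term of the series $\sum_i c_i (n-2)^{-i}$ to stay bounded away from $0$, which yields divergence.

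First I would perform the reduction. Let $(W', S')$ be the Coxeter system of rank $n$ with $m'_{st} = 3$ for all $s \neq t \in S'$. Since the Coxeter diagram of $(W,S)$ is the complete graph, $m_{st} \geq 3$ for all $s \neq t \in S$, so $(W', S') \preceq (W, S)$. By Theorem~\ref{Theorem: Theorem A in Te16} we then have $c_i^{(W', S')} \leq c_i^{(W,S)}$ for every $i$, and hence $p_{(W,S)}\left(\frac{1}{n-2}\right) \geq p_{(W', S')}\left(\frac{1}{n-2}\right)$ termwise. It therefore suffices to prove $p_{(W', S')}\left(\frac{1}{n-2}\right) = \infty$, so I may assume from now on that $m_{st} = 3$ for all $s \neq t \in S$. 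In particular $(W,S)$ is then $2$-spherical with complete Coxeter diagram and satisfies the standing assumption under which Lemma~\ref{Lemma: Mi} is proved (rank $n \geq 4 \geq 3$ and constant edge label $m = 3$), so both Lemma~\ref{Lemma: Mi} and Lemma~\ref{Lemma: ci leq di+di+1} are available.

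The key step is the following estimate, valid for $i > 3$. Lemma~\ref{Lemma: Mi} gives $2 c_{i+1} = (n-2) c_i + d_i + d_{i+1}$, while Lemma~\ref{Lemma: ci leq di+di+1} gives $d_i + d_{i+1} \geq (n-2) c_i$. Substituting the latter into the former yields
\[ 2 c_{i+1} = (n-2) c_i + (d_i + d_{i+1}) \geq 2(n-2) c_i, \]
and therefore $c_{i+1} \geq (n-2) c_i$ for all $i > 3$. Consequently the sequence $\left( c_i (n-2)^{-i} \right)_{i > 3}$ is non-decreasing. Since every rank-$3$ standard parabolic of $(W,S)$ is infinite, $W$ is infinite and hence $c_i \geq 1$ for all $i$; in particular $c_i (n-2)^{-i} \geq c_4 (n-2)^{-4} > 0$ for every $i \geq 4$. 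Thus the general term of $\sum_i c_i (n-2)^{-i}$ does not tend to $0$, whence $p_{(W', S')}\left(\frac{1}{n-2}\right) = \infty$, and by the reduction above $p_{(W,S)}\left(\frac{1}{n-2}\right) = \infty$.

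I do not expect a serious obstacle once the two lemmas are in place: the real content of the argument is entirely contained in the inequality $c_{i+1} \geq (n-2) c_i$, after which divergence is immediate. The only points requiring care are the direction of the comparison in Theorem~\ref{Theorem: Theorem A in Te16} (one needs the \emph{smaller} system $(W', S')$ so as to lower-bound the growth of $(W,S)$, opposite to the use in the convergence theorem) and the verification that the constant system with $m = 3$ genuinely falls under the convention governing Lemma~\ref{Lemma: Mi}, so that the identity may be invoked for $i > 3$.
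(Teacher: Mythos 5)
Your proposal is correct and is essentially the paper's own proof: the same reduction to the constant-label system $m'_{st}=3$ via Theorem \ref{Theorem: Theorem A in Te16}, followed by combining Lemma \ref{Lemma: Mi} and Lemma \ref{Lemma: ci leq di+di+1} for $i>3$. Your inequality $c_{i+1}\geq (n-2)c_i$ is exactly the paper's quotient-criterion estimate $\frac{c_{i+1}t^{i+1}}{c_it^i}\geq 1$ at $t=\frac{1}{n-2}$, just phrased as monotonicity of the terms rather than as a ratio bound.
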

\begin{proof}
	Let $(W', S')$ be the Coxeter system of rank $n$ with $m_{st}' = 3$ for all $s\neq t \in S'$. Using Theorem \ref{Theorem: Theorem A in Te16} it suffices to show that
	\[ p_{(W', S')} \left( \frac{1}{n-2} \right) = \infty. \]
	As before, we apply the quotient criterion. Using Lemma \ref{Lemma: Mi} and \ref{Lemma: ci leq di+di+1}, we deduce the following for $i >m = 3$ and $t = \frac{1}{n-2}$:
	\allowdisplaybreaks
	\begin{align*}
		\frac{c_{i+1} t^{i+1}}{c_i t^i} &= \frac{(n-2) c_i + d_i + d_{i+1}}{2 (n-2) c_i} = \frac{1}{2} + \frac{d_i + d_{i+1}}{2(n-2)c_i} \geq \frac{1}{2} + \frac{1}{2} = 1. \tag*{\qedhere}
	\end{align*}
\end{proof}

\bibliography{references}
\bibliographystyle{abbrv}

\end{document}